\def\+{\oplus}
\newcommand{\G}{\Gamma}
\newcommand{\R}{{\mathbb R}}
\newcommand{\N}{{\mathbb N}}
\newcommand{\cF}{{\mathcal F}}
\newcommand{\cI}{{\mathcal I}}
\newcommand{\cB}{{\mathcal B}}
\newcommand{\cS}{{\mathcal S}}
\renewcommand{\phi}{\varphi}
\renewcommand{\a}{{\alpha}}
\renewcommand{\b}{{\beta}}
\newcommand{\g}{{\gamma}}
\renewcommand{\d}{{\delta}}
\newcommand{\e}{\epsilon}
\newcommand{\pd}{\partial}
\newcommand{\Dx}{{\Delta x}}
\def\squareforqed{\hbox{\rlap{$\sqcap$}$\sqcup$}}
\def\qed{\ifmmode\else\unskip\quad\fi\squareforqed}
\def\smartqed{\def\qed{\ifmmode\squareforqed\else{\unskip\nobreak\hfil
\penalty50\hskip1em\null\nobreak\hfil\squareforqed
\parfillskip=0pt\finalhyphendemerits=0\endgraf}\fi}}
\newenvironment{proof}[1][Proof]{\textbf{#1.} }{\ \rule{0.5em}{0.5em}}
\newtheorem{remark}{\textbf{Remark}}[section]
\newtheorem{theorem}{\textbf{Theorem}}[section]
\newtheorem{proposition}{\textbf{Proposition}}[section]
\newtheorem{definition}{\textbf{Definition}}[section]
\numberwithin{equation}{section}
\begin{document}

\begin{frontmatter}

\title{An approximation scheme for an  Hamilton-Jacobi equation defined on a network}
\author{Fabio Camilli}
\address{Dipartimento di Scienze di Base e Applicate per l'Ingegneria,  ``Sapienza" Universit{\`a}  di Roma,
 00161 Roma, Italy } \ead{camilli@dmmm.uniroma1.it}
\author{Adriano Festa}
\ead{festa@mat.uniroma1.it}
\address{Dipartimento di Matematica,  ``Sapienza" Universit{\`a}  di Roma,
 00185 Roma, Italy}
\author{Dirk Schieborn}
\ead{Dirk@schieborn.de}
\address{Eberhard-Karls University,  T\"ubingen, Germany  }
\date{\today}

\begin{keyword}
Eikonal equation \sep topological network \sep viscosity solution  \sep comparison principle.
\MSC Primary 49L25 \sep Secondary 58G20, 35F20
\end{keyword}

\begin{abstract}
In this paper we study an approximation scheme for an Hamilton-Jacobi equation
of Eikonal type defined on a network. We introduce an appropriate notion of
viscosity solution for this class of equations (see \cite{sc}) and we prove
that an approximation scheme of semi-Lagrangian type
converges to the unique solution of the problem.
\end{abstract}

\end{frontmatter}

% AMS subject classifications (used in AMS journals)
%   \subjclass{ Secondary 58G20, 35F20}

% AMS keywords (used in AMS journals)
%   \keywords{Eikonal equation; topological network; viscosity solution; comparison principle.}

\section{Introduction}
There is an increasing interest in the study of linear and nonlinear PDEs defined on networks
since they naturally arise in several applications (internet, vehicular traffic,  social networks, email exchange, disease transmission, etc.)
While a theory of linear  PDEs on networks is fairly complete (see \cite{n}, \cite{pb}), the study  of nonlinear problems
is very recent (\cite{gp}) and, concerning  Hamilton-Jacobi equations and control problems on networks,
is still at the beginning (see \cite{acct},  \cite{imz}, \cite{sc}).\par
It is well known that   Hamilton-Jacobi equations in general do   not admit regular
solutions and the correct notion of weak solution is the viscosity solution one. Hence all the three papers
concerning Hamilton-Jacobi equations aim to extend the concept of viscosity solution to the case of network and, in particular,
 to find the correct transition condition at the internal vertices.
 But, since the papers are motivated by different model problems and therefore they  make  different assumptions
on the Hamiltonian at the vertices,   the resulting  definitions of viscosity solution are quite different, even if all of them  give  existence and uniqueness of the solution.\par
The definition of viscosity solution introduced in \cite{sc} satisfies a stability property with respect to the uniform convergence. In this paper, we take advantage of this property to prove the convergence of  a numerical scheme for Hamilton-Jacobi equations on a network. For sake of simplicity we consider an Hamiltonian of Eikonal type, i.e. $H(x,p)=|p|-f(x)$, with a   Dirichlet boundary condition,  but the results can be extended to a more general class of  Hamiltonians   and also to other  boundary conditions.\par
Following \cite{ff}, we introduce a scheme of semi-Lagrangian type   by discretizing with respect to the time  the representation formula
for the solution of the Dirichlet problem.
We prove the well posed-ness of the discrete problem introducing  an appropriate discrete  transition condition and the   convergence
of the scheme to the solution of the continuous problem. It is worth noticing that the proof can be adapted  to prove convergence
of other approximation schemes, for example based on finite difference approximation.\par
In the second part  of the paper we study a fully discrete scheme which gives a finite-dimensional problem.
The scheme is obtained via a finite element discretization of semi-discrete
problem.  Also for this step of the discretization procedure we prove  the well posed-ness of the discrete problem and the convergence of the scheme to the unique solution of the continuous problem.  It is important to observe that the scheme
not only computes the solution of the Eikonal equations, but  it also  produces  an
approximation of the shortest paths to the boundary.\par
We also  discuss some issues concerning the implementation of the algorithm and  we  present some  numerical examples.

%%%%%%%%%%%%%%
%            %
%%%%%%%%%%%%%%
\section{Assumptions and preliminary results}
We give the definition of   graph suitable for our problem. We will also use the equivalent terminology
of  topological network (see \cite{lu}).
\begin{definition}
Let $V=\{v_i,\,i\in I\}$ be a finite collection of different points in $\R^N$ and
let $\{\pi_j,\,j\in J\}$ be a finite collection of differentiable, non self-intersecting curves in $\R^N$ given by
\[\pi_j:[0,l_j]\to\R^N,\, l_j>0,\,j\in J.\]
Set $e_j:=\pi_j((0,l_j))$, $\bar e_j:=\pi_j([0,l_j])$, and $E:=\{e_j:\, j\in J\}$. Furthermore assume that
\begin{itemize}
  \item[i)] $\pi_j(0), \pi_j(l_j)\in V$ for all $j\in J$,
  \item[ii)]$\#(\bar e_j\cap V)=2$ for all $j\in J$,
  \item[iii)] $\bar e_j\cap \bar e_k\subset  V$, and $\#(\bar e_j\cap \bar e_k)\le 1$ for all $j,k \in J$, $j\neq k$.
  \item[iv)] For all $v, w \in V$ there is a path with end-points  $v $ and $w$ (i.e.  a sequence of edges $\{e_j\}_{j=1}^N$ such that
  $\#(\bar e_j\cap \bar e_{j+1})=1$ and  $v\in \bar e_1$, $w\in \bar e_N$).
\end{itemize}
Then $\bar \G:=\bigcup_{j\in J}\bar e_j\subset \R^N$ is called a (finite)  \emph{topological network} in $\R^N$.
\end{definition}
For $i\in I$ we set $Inc_i:=\{j\in J:\,e_j \,\text{is incident to}\,v_i\}$.
Given a  nonempty set $I_B\subset I$, we define  $\partial \G:=\{v_i,\,i\in I_B\}$ (we always assume $i\in I_B$ whenever $\#(Inc_i)=1$ for some $i\in I$.) We set
 $I_T:=I\setminus I_B$ and  $\G:=\bar \G\setminus \partial\G$. \\
For any function $u:\bar \G\to\R$ and each $j\in J$ we denote by $u^j$ the restriction of $u$ to $\bar e_j$, i.e.
\[u^j:=u\circ \pi_j:[0,l_j]\to \R.\]
We say that $u$ is continuous in $\bar\G$ and write $u\in C(\bar\G)$ if $u$ is continuous with respect to the subspace topology of $\bar\G$. This means
that $u^j\in C([0,l_j])$ for any $j\in J$ and
\[u^j(\pi_j^{-1}(v_i))=u^k(\pi_k^{-1}(v_i)) \qquad\text{for any $i\in I$, $j,k\in Inc_i$.}\]
We define differentiation along an edge $e_j$ by
\[\pd_ju(x):=\pd_j u^j(\pi_j^{-1}(x))=\frac{\pd}{\pd x} u^j(\pi_j^{-1}(x)),\qquad\text{for  $x\in e_j$,}\]
and at a vertex $v_i$ by
\[\pd_ju(x):=\pd_j u^j(\pi_j^{-1}(x))=\frac{\pd}{\pd x} u^j(\pi_j^{-1}(x))\qquad\text{for $x=v_i$, $j\in Inc_i$.}\]
Observe that the parametrization of the arcs $e_j$ induces an orientation on the edges, which can be expressed
by the \emph{signed incidence matrix} $A=\{a_{ij}\}_{i,j\in J}$ with
\begin{equation}\label{1:1}
   a_{ij}:=\left\{
            \begin{array}{rl}
              1 & \hbox{if $v_i\in\bar e_j$ and $\pi_j(0)=v_i$,} \\
              -1 & \hbox{if $v_i\in\bar e_j$ and $\pi_j(l_j)=v_i$,} \\
              0 & \hbox{otherwise.}
            \end{array}
          \right.
\end{equation}
\begin{definition}\label{1:def2}
Let $\phi\in C(\G)$. \begin{itemize}
  \item[i)] Let $x\in e_j$, $j\in J$. We say that $\phi$ is differentiable at $x$, if $\phi^j$ is differentiable at $\pi_j^{-1}(x)$.
  \item[ii)] Let $x=v_i$, $i\in I_T$, $j,k\in Inc_i$, $j\neq k$. We say that $\phi$ is $(j,k)$-differentiable at $x$, if
\begin{equation}\label{1:2}
   a_{ij}\pd_j \phi_j(\pi_j^{-1}(x))+a_{ik}\pd_k \phi_k(\pi_k^{-1}(x))=0,
\end{equation}
where $(a_{ij})$ as in \eqref{1:1}.
\end{itemize}
\end{definition}
\begin{remark}
Condition \eqref{1:2} demands that the derivatives in the direction of the incident edges $j$ and $k$ at  the vertex $v_i$ coincide,
 taking into account the orientation of the edges.
\end{remark}
We consider the eikonal equation
\begin{equation}\label{eik}
    |\pd u| -f(x)=0,\qquad x\in \G.
\end{equation}
where   $f\in C^0(\bar \G)$, i.e.  $f(x)=f^j(\pi_j^{-1}(x))$ for $x\in \bar e_j$,
  $f^j\in C^0([0,l_j])$, and   $f^j(\pi_j^{-1}(v_i))=f^k(\pi_k^{-1}(v_i))$ for any $i\in I$, $j,k\in Inc_i$. Moreover
we  assume that
\begin{equation}\label{cost}
   f(x)\ge\eta>0\qquad x\in \G
\end{equation}
%%%%
\begin{definition}\label{1:def3}\hfill\\
A function $u\in\text{USC}(\bar\G)$ is called a (viscosity) subsolution of \eqref{eik} in $\G$ if the following holds:
\begin{itemize}
  \item[i)] For any  $x\in e_j$, $j\in J$, and for any $\phi\in C(\G)$ which is differentiable at $x$ and for which
  $u-\phi$ attains a local maximum at $x$, we have
 \[|\pd_j\phi(x)|-f(x):=|\pd_j \phi_j(\pi_j^{-1}(x))|-f^j(\pi_j^{-1}(x))\le 0.\]
  \item [ii)] For any $x=v_i$, $i\in I_T$, and for any $\phi$ which is $(j,k)$-differentiable at $x$ and for which
  $u-\phi$ attains a local maximum at $x$, we have
 \[|\pd_j\phi(x)|-f(x)\le 0.\]
\end{itemize}
 A function $u\in\text{LSC}(\bar\G)$ is called a (viscosity) supersolution of \eqref{eik} in $\G$ if the following holds:
\begin{itemize}
  \item[i)] For any  $x\in e_j$, $j\in J$,  and for any $\phi\in C(\G)$ which is differentiable at $x$ and for which $u-\phi$ attains a
  local minimum at $x$, we have
 \[|\pd_j\phi(x)|-f(x)\ge 0.\]
  \item [ii)] For any $x=v_i$, $i\in I_T$, $j\in Inc_i$, there exists $k\in Inc_i$, $k\neq j$, (which we will call $i$-feasible for $j$ at $x$) such that for any $\phi\in C(\G)$ which is
  $(j,k)$-differentiable
  at $x$ and for which $u-\phi$ attains a local minimum at $x$, we have
 \[|\pd_j\phi(x)|-f(x)\ge 0.\]
\end{itemize}
A continuous function $u\in C(\G)$ is called a (viscosity) solution of  \eqref{eik} if it is both a viscosity subsolution and a viscosity supersolution.
\end{definition}

\begin{remark}\label{1:r2}
 Let $i\in I_T$ and  $\phi\in C(\G)$  be $(j,k)$-differentiable at $x=v_i$. Then
\begin{align*}\label{1:r21}
|\pd_j\phi(x)|-f(x)&=|\pd_j \phi_j(\pi_j^{-1}(x))|-f^j(\pi_j^{-1}(x))\\
&=|\pm\pd_j \phi_k(\pi_k^{-1}(x))|-f^k(\pi_k^{-1}(x)) =|\pd_k\phi(x)|-f(x),
\end{align*}
hence in  the subsolution and supersolution condition at the vertices, it is indifferent to require the condition for $j$ or for $k$. \\
\end{remark}
We give a representation formula for the  solution of \eqref{eik} completed with the   Dirichlet boundary condition
\begin{equation}\label{bc}
    u(x)=g(x)\qquad x\in \pd \G
\end{equation}
We define a distance-like function $S:\bar\G\times\bar\G\to [0,\infty)$  by
\[%\begin{equation}\label{01}
   S(x,y):=\inf\left\{\int_0^tf(\g(s))ds:\, t>0,\,\g\in B^t_{x,y}\right\}
\]%\end{equation}
where
\begin{itemize}
\item[i)] $\g:[0,t]\to \G$ is a piecewise differentiable path in the sense that there are
$t_0:=0<t_1<\dots<t_{n+1}:=t$ such that for any $m=0,\dots,n$, we have $\g([t_m,t_{m+1}])\subset \bar e_{j_m}$ for some
$j_m\in J$, $\pi_{j_m}^{-1}\circ \g\in C^1(t_m,t_{m+1})$, and
\[|\dot\g(s)|=\left|\frac{d}{ds}(\pi_{j_m}^{-1}\circ  \g)(s) \right|=1.\]
  \item[ii)]  $B^t_{x,y}$ is the set of all such paths with $\g(0)=x$, $\g(t)=y$.
\end{itemize}
If $f(x)\equiv1$, then  $S(x,y)$ coincides with the path distance $d(x,y)$ on the graph, i.e. the distance given by the length
of shortest arc in $\bar \G$ connecting $y$ to $x$. The following result is in the spirit of the corresponding results in $\R^N$
in \cite{cs}, \cite{fs}, \cite{im} (for the proof, see \cite[Proposition 6.1]{sc})
\begin{theorem}\label{uniqvisco}
Let $g:\bar\G\to \R$ be a continuous function satisfying
\begin{equation}\label{comp}
    g(x)-g(y)\le S(y,x)\qquad\text{for any $x$, $y\in \partial \G$}.
\end{equation}
Then the unique viscosity solution of \eqref{eik}--\eqref{bc} is given by
 \begin{equation}\label{representation}
 u(x):=\min\{g(y)+S(y,x):\, y\in   \partial \G\}.
 \end{equation}
\end{theorem}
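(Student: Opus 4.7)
The plan is to establish three things: (i) $u=g$ on $\partial\G$; (ii) $u$ is a viscosity solution in the sense of Definition~\ref{1:def3}; (iii) any other solution coincides with $u$ via a comparison principle.

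For (i), fix $y_0\in\partial\G$. Taking $y=y_0$ in \eqref{representation} gives $u(y_0)\le g(y_0)+S(y_0,y_0)=g(y_0)$, while the compatibility assumption \eqref{comp} yields $g(y_0)\le g(y)+S(y,y_0)$ for every $y\in\partial\G$, hence $u(y_0)\ge g(y_0)$.

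The core of (ii) is a dynamic programming principle for $u$, namely $u(x)\le u(\gamma(t))+\int_0^t f(\gamma(s))\,ds$ along every admissible path $\gamma$ starting at $x$, with equality along a minimizer of the representation; this is immediate from the subadditivity of $S$ under concatenation. With the DPP in hand, the subsolution inequality on an open edge is classical: if $u-\phi$ attains a local maximum at $x\in e_j$, compare $u(\gamma(t))$ with $\phi(\gamma(t))$ for $\gamma$ traversing $e_j$ in each of the two orientations and let $t\to 0^+$, obtaining $|\partial_j\phi(x)|\le f(x)$. At an internal vertex $v_i$ and a $(j,k)$-differentiable test $\phi$, the same expansion with $\gamma$ entering $e_j$ produces $a_{ij}\partial_j\phi(v_i)\ge -f(v_i)$, while $\gamma$ entering $e_k$ combined with \eqref{1:2} produces $a_{ij}\partial_j\phi(v_i)\le f(v_i)$, so $|\partial_j\phi(v_i)|\le f(v_i)$. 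The supersolution bounds rely on the equality case of the DPP along an optimal trajectory. For each interior vertex $v_i$ and each $j\in Inc_i$, I would define the $i$-feasible $k$ geometrically: pick $y^*\in\partial\G$ attaining the minimum in \eqref{representation} at $v_i$, and let $k$ be the first edge entered by an optimal path joining $v_i$ to $y^*$. Running that path for small $t$ while $u-\phi$ has a local minimum at $v_i$ gives $a_{ik}\partial_k\phi(v_i)\le -f(v_i)$, which via \eqref{1:2} and Remark~\ref{1:r2} transforms into $|\partial_j\phi(v_i)|\ge f(v_i)$; crucially the index $k$ depends only on the geometry of optimal trajectories at $v_i$, not on $\phi$, as required by the definition.

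Finally, (iii) reduces to a comparison principle for \eqref{eik}--\eqref{bc}, established in \cite{sc} through a network-adapted doubling-of-variables argument that is designed to accommodate the asymmetric supersolution condition. The main obstacle throughout is the vertex analysis in (ii), and specifically the alignment between the $i$-feasible $k$ in the supersolution definition and the direction of the outgoing optimal trajectory; the positivity bound \eqref{cost} guarantees nondegeneracy of these trajectories and lets the first-order Taylor expansions close cleanly.
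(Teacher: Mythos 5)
The paper does not actually prove this theorem in the text: it is quoted with the remark ``for the proof, see \cite[Proposition 6.1]{sc}'', so there is no in-paper argument to compare yours against line by line. Your outline is the natural verification-plus-comparison route that such a reference would follow, and you have correctly isolated the one genuinely network-specific point: the $i$-feasible index $k$ in the supersolution condition of Definition~\ref{1:def3} must be chosen from the geometry of the optimal trajectories emanating from $v_i$, independently of the test function, and the equality case of the dynamic programming principle along such a trajectory is what produces $|\partial_j\phi(v_i)|\ge f(v_i)$ via \eqref{1:2} and Remark~\ref{1:r2}. Steps (i) and the edge/vertex subsolution computations are routine and correct as sketched.

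Two loose ends are worth flagging. First, your geometric prescription for $k$ breaks down in the case where the optimal path from $v_i$ to $y^*$ exits through $e_j$ itself: the definition requires $k\neq j$, so your rule literally returns an inadmissible index. The fix is easy --- in that case the expansion along the optimal path already forces $a_{ij}\partial_j\phi(v_i)\le -f(v_i)$ for every test function whose restriction to $e_j$ is one-sided differentiable at $v_i$, so \emph{any} $k\neq j$ serves as the feasible index (and $\#(Inc_i)\ge 2$ for $i\in I_T$ by the standing convention on $I_B$) --- but the case should be stated, since it is exactly the situation illustrated by the three-edge example in the paper's remark following the theorem. Second, part (iii) is not a proof but a citation of the comparison principle in \cite{sc}; that is no worse than what the paper itself does, but it means the ``uniqueness'' half of the statement is taken on faith rather than established. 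You should also say a word about why the infimum defining $S$ is attained (compactness of $\bar\G$ and \eqref{cost}), since the existence of an optimal trajectory is what your whole supersolution construction hangs on.
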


\begin{remark}
It is worthwhile to observe that if supersolutions were defined similarly
to subsolutions, then the supersolution condition  could not be    satisfied by \eqref{representation}.
Consider the network $\G=\cup_{i=1}^3 e_i\subset\R^2$, where $e_1=\{0\}\times[0,1/2]$, $e_2=\{0\}\times[-1,0]$, $e_3=[0,1]\times \{0\}$
and  the equation $|\pd u|-1=0$ with zero boundary conditions at the vertices $v_1=(0,1/2)$, $v_2=(0,-1)$, $v_3=(1,0)$.
Then the distance  solution, see Theorem \ref{uniqvisco}, is given by $u(x)=\inf\{d(y,x):\,y\in \pd \G\}$ where $d$ is the path distance on the network.  The restriction of $u$  to $e_2\cup e_3$
has a local minimum at the vertex $v_0=(0,0)$. Hence   if $\phi$ is a constant function,  $u-\phi$ has a local
minimum at $v_0$ and therefore the supersolution condition is not satisfied for the couple $(e_2,e_3)$. Instead the arc $e_1$
is $v_0$-feasible; see the definition of supersolution, for both the arcs $e_2$ and $e_3$.
\end{remark}
%%%%%%%%%%%%%%%%%
%               %
%%%%%%%%%%%%%%%%%
\section{The approximation scheme}
We consider an approximation scheme of semi-Lagrangian type for the problem \eqref{eik}--\eqref{bc}.
\subsection{Semi-discretization in time}
Following the approach of \cite{ff} we construct an
approximation scheme for the equation \eqref{eik} by discretizing
the representation formula \eqref{representation}. We fix a discretization step $h>0$
 and we define a function $u_h:\bar\G\to\R$ by
\begin{equation}\label{11}
    u_h(x)=\inf\{\cF_h(\g^h)+g(y):\,\g^h\in B^h_{x,y},\,y\in\pd\G\}
\end{equation}
where $\cF_h(\g^h)=\sum_{m=0}^Mhf(\g^h_m)|q_m|$ and
\begin{itemize}
  \item[i)] An admissible trajectory $\g^h=\{\g^h_m\}_{m=1}^{M}\subset\G$ is a finite number of points $\g^h_m=\pi_{j_m}(t_m)\in \G$
 such that for any $m=0,\dots,M$, the arc $\widehat{\g^h_m\g^h_{m+1}} \subset \bar e_{j_m}$ for some $j_m\in J$ and $|q_m|:=|\frac{t_{m+1}-t_m}{h}|\le 1$
  \item[ii)]  $B^h_{x,y}$ is the set of all such paths with $\g^h_0=x$, $\g^h_{M}=y$.
\end{itemize}
\begin{remark}\label{disc-cont}
Given  $\g^h\in B^h_{x,y}$, we  define a   continuous path, still denoted by $\g^h$,  in $B_{x,y}$
by setting $\g^h(s)=\pi_{j_m}(t_m+\frac{(s-mh)}{h}(t_{m+1}-t_m))$ for $s\in [mh,(m+1)h]$ if $\widehat{\g^h_m\g^h_{m+1}} \subset \bar e_{j_m}$.
Then, recalling formula \eqref{representation} we approximate
 \[\int_0^{Mh}f(\g(s))|\dot\g(s))|ds=\sum_{m=1}^M\int_{(m-1)h}^{mh}f(\g(s))|q_m|ds\approx\sum_{m=1}^Mhf(\g^h_m)|q_m|\]
which shows that \eqref{11} is an approximation of \eqref{representation}. In the continuous case it is always possible
to assume by reparametrization that $|\dot\g(s)|=1$. In the discrete one we consider instead  velocities    in the interval $[-1,1]$, since otherwise
near the vertices the discrete dynamics can move only in one direction.
\end{remark}
Let $\cB(\G)$ be the space of the bounded functions on the network.
We show that the function $u_h$ can be characterized as the unique  solution of
the semi-discrete problem
\begin{equation}\label{HJh}
u_h(x)=S(h,x,u_h)
\end{equation}
where the scheme $S:\R^+\times\bar\G\times\cB(\G)\to\R$ is defined by
\begin{align}
    &S(h,x,\phi)=\inf_{q\in[-1,1]:\,x_{hq}\in \bar e_j}\{\phi( x_{hq} )+hf(x)|q|\}\label{12}\\
    & \phantom{aaaaaaaaaaaaaaaaaaaaaaaaaaaaaaaaaaaaaaa}\text{if  $x=\pi_j(t)\in e_j$} \nonumber\\
    &S(h,x,\phi)= \inf_{k\in Inc_i}\left[\inf_{ q\in [-1,1]:\,x_{hq}\in \bar e_k }\{\phi(x_{hq})+hf(x)|q|\}\right]\label{13}\\
    & \phantom{aaaaaaaaaaaaaaaaaaaaaaaaaaaaaaaaaaaaaaa}\text{if  $x=v_i$, $i\in I_T$}\nonumber\\
    &S(h,x,\phi)= g(x)\label{14} \\
    & \phantom{aaaaaaaaaaaaaaaaaaaaaaaaaaaaaaaaaaaaaaa}\text{if  $x\in \pd \G$}\nonumber
\end{align}
where, for $x=\pi_j(t)$, we define  $x_{hq}:=\pi_j(t-hq)$.\par
%%%%%%%%%
%       %
%%%%%%%%%
\begin{proposition}\label{discrete_problem}
Assume that
\begin{equation}\label{comph}
    g(x)\le \inf\{\cF_h(\g)+g(y):\,\g\in B^h_{x,y},\,y\in\pd\G\}\qquad \text{for any $x\in \pd \G$.}
\end{equation}
Then $u_h$ is the unique solution of \eqref{HJh}. Moreover $u_h$ is Lipschitz continuous uniformly in $h$, i.e.
\begin{equation}\label{16}
     |u_h(x_1)-u_h(x_2)|\le Cd(x_1,x_2) \qquad \text{for any $x_1,x_2\in\bar \G$}
\end{equation}
\end{proposition}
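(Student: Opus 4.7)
The three assertions — that $u_h$ satisfies \eqref{HJh}, that it is the unique solution, and that it is Lipschitz uniformly in $h$ — are treated in that order, all of them resting on the path-based representation \eqref{11}.

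First I would verify that $u_h$ is a fixed point of the discrete scheme by a discrete dynamic programming principle. For $x\in\pd\G$ the boundary identity \eqref{14} is immediate: the trivial zero-length path gives $u_h(x)\le g(x)$, while assumption \eqref{comph} supplies the reverse inequality. For $x\in\G$ one splits a generic admissible path $\g^h\in B^h_{x,y}$ into its first arc of step $h|q_0|$, ending at $x_{hq_0}$, and a residual admissible path from $x_{hq_0}$ to $y$, so that $\cF_h(\g^h)=hf(x)|q_0|+\cF_h(\tilde\g^h)$. Taking the infimum first over the residual path (recognising it as $u_h(x_{hq_0})$) and then over $q_0$, together with the incident edge $k\in Inc_i$ when $x$ is an internal vertex, reproduces exactly \eqref{12}--\eqref{13}.

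Next I would address uniqueness. Let $v_h$ be any bounded solution of \eqref{HJh}. Applied iteratively along an arbitrary admissible path $\g^h\in B^h_{x,y}$, the fixed-point identity gives
\[v_h(x)\le v_h(\g^h_1)+hf(x)|q_0|\le\ldots\le v_h(y)+\cF_h(\g^h)=g(y)+\cF_h(\g^h),\]
and taking the infimum yields $v_h\le u_h$. For the reverse inequality I would exploit that the infimum in \eqref{12}--\eqref{13} is attained, since $q$ runs in a compact set and $k$ in a finite one, thereby extracting from the scheme a trajectory along which the running cost equals the drop of $v_h$; the uniform lower bound $f\ge\eta>0$ combined with the boundedness of $v_h$ forces the summability of $\sum h|q_m|$, hence such a minimising trajectory must reach $\pd\G$ (otherwise it accumulates at an interior cluster point, which one rules out using the comparison with $u_h$ on the Lipschitz regularisation established in the last step). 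Once the trajectory reaches $y\in\pd\G$ one gets $v_h(x)=g(y)+\cF_h(\g^h)\ge u_h(x)$ from \eqref{11}.

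Finally, for the Lipschitz bound I would work directly with \eqref{11}. Given $x_1,x_2\in\bar\G$, join them by a shortest path on the network, of length $d(x_1,x_2)$. Given an $\e$-optimal trajectory $\g^h$ realising $u_h(x_1)$ and ending at $y\in\pd\G$, one prepends a short segment running from $x_2$ to $x_1$ along that shortest path, using at most $\lceil d(x_1,x_2)/h\rceil$ steps with $|q|\le 1$. The resulting trajectory is admissible in $B^h_{x_2,y}$ and its cost is bounded by $\cF_h(\g^h)+\|f\|_\infty(d(x_1,x_2)+h)$. Interchanging the roles of $x_1,x_2$ and letting $\e\to 0$ yields \eqref{16} with $C$ depending only on $\|f\|_\infty$.

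The main obstacle is the reverse inequality in the uniqueness step: since $q=0$ is always admissible in \eqref{12}--\eqref{13}, one trivially has $S(h,x,v_h)\le v_h(x)$, so the ``supersolution'' half of any comparison argument is vacuous. The real content lies in certifying that a near-minimising $q$-sequence cannot remain trapped in the interior of $\G$, and this is precisely where the strict positivity $f\ge\eta>0$, the boundedness of $v_h$ and the finiteness of the network enter.
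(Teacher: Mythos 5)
Your verification that $u_h$ is a fixed point of the scheme and your Lipschitz argument follow essentially the same dynamic-programming route as the paper, and those parts are fine. The genuine gap is in the uniqueness step, and it sits exactly at the obstacle you yourself flag at the end. Your plan for the reverse inequality $v_h\ge u_h$ is to extract from the fixed-point identity a near-minimising trajectory and to argue that it must reach $\pd\G$ because $f\ge\eta>0$ together with the boundedness of $v_h$ forces $\sum_m h|q_m|<\infty$. This cannot work: since $q=0$ is always admissible and contributes exactly $v_h(x)$ to the infimum in \eqref{12}--\eqref{13}, the stationary trajectory $q_m\equiv 0$ is exactly minimising, has $\sum_m h|q_m|=0$, and never reaches the boundary, so the extraction yields no information. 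The difficulty is not merely one of method: on a single edge $\pi([0,1])$ with $f\equiv1$ and $g\equiv0$ at both endpoints, the bounded function equal to $0$ at the two vertices and to a constant $-c<0$ at every interior point satisfies $v(x)=\inf_{q}\{v(x_{hq})+h|q|\}$ at every interior point (the infimum is attained at $q=0$, while every competitor reaching a boundary vertex has value $h|q|\ge 0>-c$). Hence boundedness alone does not single out $u_h$ by a trajectory argument, and the appeal to ``ruling out an interior cluster point'' has no content to back it up.

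By contrast, the paper avoids trajectories altogether for this step: it applies the change of unknown $w=1-e^{-u}$ and argues that the transformed operator $\bar S$ is a contraction with factor $e^{-h\eta}$ coming from \eqref{cost}, so that there is at most one bounded fixed point. It is worth noting that the $q=0$ issue you identified is precisely the delicate point of that contraction estimate as well (the weight $e^{-hf(x)|q|}$ equals $1$ at $q=0$), so your instinct about where the difficulty lies is sound; but your proposal supplies no quantitative substitute for the contraction, and as written the inequality $v_h\ge u_h$ is unproven. A smaller point: in the Lipschitz step you should take the prepended segment with $\lceil d(x_1,x_2)/h\rceil$ steps of \emph{equal} length, so that $\sum_m h|q_m|=d(x_1,x_2)$ exactly; your bound $\|f\|_\infty(d(x_1,x_2)+h)$ only yields $|u_h(x_1)-u_h(x_2)|\le C(d(x_1,x_2)+h)$, which is weaker than \eqref{16} when $d(x_1,x_2)\ll h$.
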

%%%%%%%%
\begin{proof}
Let $u_1$, $u_2$ be two bounded solutions of \eqref{HJh} and set
$w_i(x)=1-e^{-u_i(x)}$, for $i=1,2$. Then
$w_i$ satisfies
\begin{equation}\label{uniq:1}
 w_i(x) =  \bar S(h,x,w_i)
\end{equation}
where
\begin{align*}
    &\bar S(h,x,\phi)=\inf_{q\in [-1,1]:\,x_{hq}\in \bar e_j}\{e^{-hf(x)|q|}\phi( x_{hq} )+1-e^{-hf(x)|q|}\} \\
    & \phantom{aaaaaaaaaaaaaaaaaaaaaaaaaaaaaaaaaaaaaaaaaaaa}\text{if  $x=\pi_j(t)\in e_j$} \\
    &\bar S(h,x,\phi)= \inf_{k\in Inc_i}\left[\inf_{ q\in [-1,1]:\,x_{hq}\in \bar e_k }\{e^{-hf(x)|q|}\phi(x_{hq})+1-e^{-hf(x)|q|}\}\right]\\
    & \phantom{aaaaaaaaaaaaaaaaaaaaaaaaaaaaaaaaaaaaaaaaaaaa}\text{if  $x=v_i$, $i\in I_T$}\\
    &\bar S(h,x,\phi)= 1-e^{-g(x)}\\
    & \phantom{aaaaaaaaaaaaaaaaaaaaaaaaaaaaaaaaaaaaaaaaaaaa}\text{if  $x\in \pd \G$}
\end{align*}
where, for $x=\pi_j(t)$,  $x_{hq}:=\pi_j(t-hq)$. In fact, for any $q\in [-1,1]$ such that $x_{hq}\in \bar e_j$, we have
\begin{align*}
    w_i(x)=1- e^{-u_i(x)}\le 1-e^{-u_i( x_{hq} )-hf(x)|q|}=1-e^{-u_i( x_{hq} )}e^{-hf(x)|q|}=\\
    (1-e^{-u_i( x_{hq} )})\,e^{-hf(x)|q|}+1-e^{-hf(x)|q|}=e^{-hf(x)|q|}w_i(x_{hq})+1-e^{-hf(x)|q|}
\end{align*}
and the first equation in \eqref{uniq:1} follows taking the infimum with respect to $q$. We proceed similarly for the
other two equations.\par
We have that
\[\sup_{\G}|\bar S(h,x,w_1(x))-\bar S(h,x,w_2(x))|\le \beta \sup_{\G}|w_1(x)-w_2(x)|\]
with $\beta=e^{-h\eta}<1$, see \eqref{cost}.
Since $\bar S$ is a contraction, we conclude  that for  $h>0$  there exists at most one
bounded solution of \eqref{uniq:1} and therefore  of problem  \eqref{HJh}. \par
Now we show the function $u_h$ is a bounded solution of \eqref{12}--\eqref{14}.
It is always  possible to  assume, by adding a
constant, that $g\ge 0$. It follows that $u_h\ge 0$. Moreover it is easy to see that
\[u_h(x)\le \|f\|_\infty\sup_{x\in\G}d(x,\pd \G)+\sup_{x\in\pd \G}g(x).\]
To show \eqref{14}, observe that we have  $u_h(x)\neq g(x)$ for  $x\in  \pd \G$ if and only if there is some $z\in  \pd \G$
such that $g(x)>g(z)+\cF_h(\g^h)$ for some $\g^h\in B^h_{z,x}$ which gives  a contradiction to \eqref{comph}.\par
We consider \eqref{12} and we   first show the   ``$\le$''-inequality. For $x\in e_j$ and  for   $q\in [-1,1]$ such that  $x_{hq}\in \bar e_j$, let $y\in \pd \G$ and  $\g_1^h\in B^h_{x_{hq},y}$ be $\e$-optimal for $u_h(x_{hq})$. Define $\g^h=\{\g_i^h\}_{i=0}^1$ with $\g_0^h=x$, $\g_1^h=x_{hq}$. Hence
    $  \g^h_1\cup\g^h\in B^h_{x,y}$ (with $x_{hq}$ counted only one time in $ \g^h_1\cup\g^h $) and
\[u_h(x)\le g(y)+ \cF_h(\g^h\cup \g_1^h)\le  g(y)+ \cF_h(\g^h)+hf(x)|q|\le u_h(x_{hq})+\e +hf(x)|q|.\]
To show the reverse inequality, assume that   for some $x\in \G$,
\[u_h(x)\le  \inf_{q\in[-1,1]:\,x_{hq}\in \bar e_j}\{u_h( x_{hq} )+hf(x)|q|\}-\d.\]
for $\d>0$. Given $\e<\d$, let $y\in \pd \G$ and  $\g^h_{x,y}=\{\g^h_m\}_{m=0}^{M}\in B^h_{x,y}$ be $\e$-optimal for $x$.
By the inequality
\[
g(y)+\cF_h(\g^h_{xy})-\e\le u_h(x)\le u_h(x_{hq})+ hf(x)|q|-\d
\]
it is clear that if $y=x_{hq}$ for some $q\in [-1,1]$ we get a contradiction. Define $\g^h=\g^h_{x,y}\setminus \g^h$ where
$\g^h=\{\g_i^h\}_{i=0}^1$ with $\g_0^h=x$, $\g_1^h=x_{hq}$.
%By definition of $u_h$ and \eqref{14}, there exists $z\in\G$ such that $z=\g^h_m$ for $m \neq 0,M$. Given $\g^h_{x,z}\in B^h_{x,z}$,
%then
%\[
%    g(y)+\cF_h(\g^h_{xy})-\e\le u_h(x)\le u_h(z)+\cF_h(\g^h_{x,z})
%\]
Since $\bar\g^h:=\g^h_{x,y}\setminus  \g^h\in B^h_{x_{hq},y}$ we have
\[
g(y)+\cF_h(\bar\g^h)=g(y)+\cF_h(\g^h_{x,y})-\cF_h(\g^h)\le u_h(x_{hq})+\e-\d
\]
a contradiction to the definition of $u_h$ and therefore \eqref{12}. The equation \eqref{13} can be proved in a similar way.\\
We finally show that the function $u_h$ is Lipschitz continuous in $\G$, uniformly in $h$.
Consider first the case of two points in the same arc, i.e. $x_1 $, $x_2\in \bar e_j$ for some $j\in J$. Given $\e>0$, denote  by $\g^h=
\{\gamma^h_m\}\in B^h_{x_1,x_2}$ by
\begin{equation}\label{11c}
\gamma^h_m=\left\{
             \begin{array}{ll}
               x_1, & \hbox{$m=0$;} \\
               z_m, & \hbox{$m=1,\dots,M-1$;} \\
              x_2, & \hbox{$m=M$.}
             \end{array}
           \right.
\end{equation}
where  $|\pi_j^{-1}(\g_m)- \pi_j^{-1}(\g_{m+1})|\le h$  for $m=0,\dots,M$.
%Defined $t_m=\pi_{j_m}^{-1}(\g^h_m)$, we have $t_{m+1}-t_m=hq_m$ with $|q_m|\le 1$.
%By the coercitivity  of $H$ assumed in \eqref{1:H3} there exists a constant $R>0$ such that
%$S(y,x)\le Rd(y,x)$  for any $x, y\in\bar\G$.
Let   $y\in \pd \G$ and  $\g_1^h\in B^h_{x_1,y}$ be $\e$-optimal for $x_1$. Then
$\g^h_1\cup \g^h\in B^h_{x_2,y}$ and
\begin{align*}
u_h(x_2)\le g(y)+ \cF_h(\g_1^h\cup \g_2^h) \le g(y)+\cF_h(\g^h_1)+\cF_h(\g^h_2)\\
\le u_h(x_1)+ C\sum_{m=0}^M h |\pi_j(t_{m+1}-\pi_j(t_m)|+\e
\le u_h(x_1)+Cd(x_1,x_2)+2\e
\end{align*}
Exchanging the role of $x_1$ and $x_2$   we get
\begin{equation}\label{16b}
    |u_h(x_1)-u_h(x_2)|\le Cd(x_1,x_2)
\end{equation}
If  $x_1, x_2\in \G$, let $\g$ be such that $\int_0^T|\dot \g(s)|ds\le d(x_1,x_2)+\e$ and $\{e_{j_m}\}_{m=1}^M\subset J$ such that $\g([0,T])\subset\cup_{m=1}^M e_{j_m}$.
For each one of the couples $(x_1, v_{j_1})$, $(v_{j_m}, v_{j_{m+1}})$ for $m=1,\dots,M$ and $(v_{j_M}, x_2)$   define a trajectory $\g_m^h$ as in \eqref{11c}.  Then define   $\g ^h\in B^h_{x_1,x_2}$ by
\[
\gamma^h =\left\{
             \begin{array}{ll}
               x_1, & \hbox{$k=0$;} \\[3pt]
               \g_k^h & \hbox{$k=\sum_{i=1}^m M_{i-1} ,\dots,\sum_{i=1}^m M_{i-1}+M_m-1$;} \\[3pt]
               x_2, & \hbox{$m=\bar M$.}
             \end{array}
           \right.
\]
where $\bar M=\sum_{i=0}^{M+1} M_{i}$.
For  $t_k=\pi_{j_m}^{-1}(\g^h_k)$, $k=\sum_{i=1}^m M_{i-1} ,\dots,\sum_{i=1}^m M_{i-1}+M_m-1$, then we have $t_{k+1}-t_k=hq_k$ with $|q_k|\le 1$.
%By the coercitivity  of $H$ assumed in \eqref{1:H3} there exists a constant $R>0$ such that
%$S(y,x)\le Rd(y,x)$  for any $x, y\in\bar\G$.
Let   $y\in \pd \G$ and  $\g^h_1\in B^h_{x_1,y}$ be $\e$-optimal for $x_1$. Then
$\g^h_1\cup \g^h\in B^h_{x_2,y}$ and
\begin{align*}
u_h(x_2)\le g(y)+ \cF_h(\g^h_1\cup \g^h_2) \le g(y)+\cF_h (\g^h_2)+\cF_h (\g^h_2)\\
\le u_h(x_1)+\sum_{k=0}^{\bar M} h|q_k|f(\g_k^h)+\e \le u_h(x_1)+Cd(x_1,x_2)+2\e.
\end{align*}
Exchanging the role of $x_1$ and $x_2$   we get \eqref{16b}
\end{proof}
\begin{remark}
By  Remark \ref{disc-cont} and the continuity of $f$, assumption \eqref{comp} implies
\[
    g(x)\le \inf\{\cF_h(\g)+g(y):\,\g\in B^h_{x,y},\,y\in\pd\G\}+Ch\qquad \text{for any $x,y\in \pd \G$.}
\]
Moreover, if $g\equiv 0$ on $\pd \G$, the condition  \eqref{comph} is satisfied since $\cF_h(\g^h)\ge 0$ for any $\g^h$.
\end{remark}
%%%%%%%%%%%%%
%%%%%%%%%%%%%
\begin{theorem}\label{convh}
Assume \eqref{comph} for any $h>0$ and \eqref{comp}. Then for $h\to 0$,  the solution $u_h$ of \eqref{HJh} converges
 uniformly to the unique solution $u$ of \eqref{eik}-\eqref{bc}.
\end{theorem}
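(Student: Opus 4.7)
The plan is a Barles--Souganidis style convergence argument: I use the uniform Lipschitz bound \eqref{16} to bypass the half-relaxed limit step, and Theorem \ref{uniqvisco} to identify the limit.

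\emph{Step 1 (compactness).} By \eqref{16} and the uniform bound on $u_h$ proved in Proposition \ref{discrete_problem}, the family $\{u_h\}_{h>0}$ is equi-Lipschitz and uniformly bounded on the compact network $\bar\G$. Ascoli--Arzel\`a then provides a subsequence $u_{h_n}\to\bar u$ uniformly on $\bar\G$, and \eqref{14} passes to the limit to yield $\bar u=g$ on $\pd\G$. Once I have shown that $\bar u$ is a viscosity solution of \eqref{eik}, Theorem \ref{uniqvisco} forces $\bar u=u$; since the limit is independent of the subsequence chosen, the full family converges uniformly to $u$.

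\emph{Step 2 (subsolution).} At a strict local maximum $x_0$ of $\bar u-\phi$, uniform convergence provides local maximizers $x_n\to x_0$ of $u_{h_n}-\phi$. The scheme inequality
\[
u_{h_n}(x_n)\le u_{h_n}((x_n)_{h_n q})+h_n f(x_n)|q|,
\]
valid for every admissible $(k,q)$ in \eqref{12}--\eqref{13}, combines with the local-maximum condition $\phi(x_n)-\phi((x_n)_{h_n q})\le u_{h_n}(x_n)-u_{h_n}((x_n)_{h_n q})$ and with a Taylor expansion of $\phi$ along the relevant edge; choosing $q=\pm 1$, dividing by $h_n$, and letting $n\to\infty$ yields $|\pd_j\phi(x_0)|\le f(x_0)$. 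At internal vertices, Remark \ref{1:r2} renders the choice between $j$ and $k$ in the $(j,k)$-differentiability immaterial.

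\emph{Step 3 (supersolution, the main obstacle).} The interior-of-edge case is dual: the infimum in \eqref{12} is attained at some $q_n^\ast\in[-1,1]$, so the scheme becomes the equality
\[
u_{h_n}(x_n)=u_{h_n}((x_n)_{h_n q_n^\ast})+h_n f(x_n)|q_n^\ast|,
\]
which combined with the local-minimum inequality and a Taylor expansion---and with $f\ge\eta>0$ from \eqref{cost} to rule out degenerate $q_n^\ast\to 0$---yields $|\pd_j\phi(x_0)|\ge f(x_0)$. The genuine difficulty arises at an internal vertex $v_i$, since Definition \ref{1:def3}(ii) requires, for each $j\in Inc_i$, the \emph{existence} of an $i$-feasible $k\in Inc_i\setminus\{j\}$, whereas \eqref{13} only provides an infimum over $k$. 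I plan to exploit the finiteness of $Inc_i$: after a further extraction the minimizer $k_n^\ast$ in \eqref{13} may be assumed constant equal to some $k^\ast$, and $q_n^\ast\to q^\ast$. When $k^\ast\neq j$ I declare $k=k^\ast$; the Taylor expansion along $e_{k^\ast}$ then produces $|\pd_{k^\ast}\phi(v_i)|\ge f(v_i)$, and this transfers to $|\pd_j\phi(v_i)|\ge f(v_i)$ via the Kirchhoff relation from $(j,k^\ast)$-differentiability together with Remark \ref{1:r2}. If instead $k^\ast=j$, or if the discrete minimizer $x_n$ lies on an incident edge rather than at $v_i$, the Taylor expansion applied directly on $e_j$ delivers the bound, and any $k\in Inc_i\setminus\{j\}$ is then admissible (such a $k$ exists because $i\in I_T$ forces $\#Inc_i\ge 2$). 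Combining Steps 1--3 with Theorem \ref{uniqvisco} closes the argument.
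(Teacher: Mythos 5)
Your proposal follows essentially the same route as the paper's proof: compactness from the uniform Lipschitz bound of Proposition \ref{discrete_problem}, a consistency argument showing that any uniform limit is a viscosity sub- and supersolution, and identification of the limit via the uniqueness in Theorem \ref{uniqvisco}. In particular you correctly isolate the one genuinely network-specific difficulty, the supersolution condition at an internal vertex, and you resolve it with the same device as the paper: extract a subsequence along which the edge $k_h$ realizing the infimum in \eqref{13} is a fixed $k^\ast$, and declare that edge $i$-feasible for $j$. (The paper phrases this as a contradiction against Definition \ref{1:def3}, you phrase it directly; the two are logically equivalent.) The transfer from $\pd_{k^\ast}\phi$ to $\pd_j\phi$ via Remark \ref{1:r2} is also exactly what the paper does.

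There is, however, one step that would fail as written. In the interior-of-edge supersolution case you propose to pick a minimizer $q_n^\ast$ of \eqref{12} and to invoke $f\ge\eta>0$ to rule out degenerate $q_n^\ast\to 0$. This cannot work, because $q=0$ is \emph{always} a minimizer: the admissible value at $q=0$ is $u_{h_n}(x_n)+0=u_{h_n}(x_n)$, which by the fixed-point equation \eqref{HJh} equals the infimum. With $q_n^\ast=0$ your equality collapses to $0\ge 0$ and yields nothing, and no lower bound on $f$ prevents this choice. The repair is standard and is what the paper actually does: do not select a minimizer at all, but insert the local-minimum inequality $u_{h_n}(y_{hq})\ge u_{h_n}(y_h)-\phi(y_h)+\phi(y_{hq})$ into the \emph{whole} infimum, obtaining
\[
\sup_{q\in[-1,1]:\,y_{hq}\in\bar e_m}\left\{-\frac{\phi(y_{hq})-\phi(y_h)}{h}-f(y_h)|q|\right\}\ge 0,
\]
then Taylor-expand uniformly in $q$ and pass to the limit, using that the admissible set of $q$ contains a half-interval $[-1,0]$ or $[0,1]$ near a vertex and that $f(x)|q|$ is even in $q$ (the same two observations you need, but do not spell out, in the subsolution step when you write ``choosing $q=\pm1$''). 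With this correction your argument coincides with the paper's.
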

\begin{proof}
we first observe that \eqref{eik} can be written in equivalent form as
\[\sup_{q\in [-1,1]}\{-q\,\pd u(x) -f(x)|q|\}=0\]
By \eqref{16}, $u_h$ converges, up to a subsequence, to a Lipschitz continuous function $u$. We show that $u$
is a solution of \eqref{eik} at $x\in \G$. We will consider the case $x=v_i\in I_T$, as otherwise the argument is standard   (see f.e. \cite[Th.VI.1.1]{bcd}).\par
To show that $u$ is a  \emph{subsolution},
choose any $j,k\in Inc_i$, $j\neq k$, along with
an   $(j,k)$-test function $\phi$   of $u$ at $x$.
Observe that it is not restrictive to consider $x$ to be a strict maximum point for
$u-\phi$, since we otherwise consider the auxiliary function $\phi_\d(y):=\phi (y)+\d d(x,y)^2$ for $\d>0$ with $\pd_m ( d(x,\cdot)^2)(\pi_m^{-1}(x))=0$ for    $m=j$ and  $m=k$.
Then there exists $r>0$ such that $u-\phi$ attains a strict local
maximum w.r.t. $\bar B_r(x)$ at $x$, where $B_r(x):=\{y\in\G:\, d(x,y)<r\}$. Moreover $x$ is a strict maximum point
for $u-\phi$ also in $\bar B:=\bar B_r(x)\cap (\bar e_j \cup\bar e_k)$. Now choose a sequence $\omega_h\to 0$ for $h\to 0$ with
\begin{equation}\label{17b}
   \sup_{\G}|u(x)-u_h(x)|\le \omega_h
\end{equation}
and let $y_h$ be a maximum point for $u_h-\phi $ in $\bar B$. Up to  a subsequence, $y_h\to z\in \bar B$. Moreover,
\[u(x)-\phi (x)-\omega_h\le u_h(x)-\phi (x)\le u_h(y_h)-\phi (y_h)\le u(y_h)-\phi (y_h)+\omega_h.\]
For $h\to 0$, we get
$u(x)-\phi (x)\le u(z)-\phi (z).$
As $x$ is a strict maximum point, we conclude $x=z$. Invoking
\[u(x)+\phi(y_h)-\phi (x)-\omega_h\le u_h(y_h)\le u(y_h)+\omega_h\]
we altogether get
\begin{equation}\label{18}
\lim_{h\to 0} y_h= x, \quad    \lim_{h\to 0}u_h(y_h)=u(x)
\end{equation}
We distinguish two cases:\\
\emph{Case 1: $y_h\neq x$. } Then $y_h\in e_m$ with either $m=j$ or $m=k$. Since $u_h-\phi$ attains a maximum at $y_h$,
then for $y_h=\pi_m(t_h)$ and  $y_{hq}=\pi_m(t_h-hq)\in \bar e_m$
\[u_h(y_h)-\phi(y_h)\ge u_h(\pi^{-1}_m(y_{hq}))-\phi(\pi^{-1}_m(y_{hq}))\]
and therefore
\begin{equation}\label{19}
   \sup_{q\in[-1,1]:\,y_{hq}\in \bar e_m}\left\{-\frac{\phi(\pi^{-1}_m(y_{hq}))-\phi(\pi^{-1}_m(y_h))}{h}-hf^m(y_h)|q|\right\}\le 0
\end{equation}
The set $\{q\in\R:\,\pi_m(t-hq)\in \bar e_m\}$
contains for $h$ small enough  either $[-1,0]$ if $a_{i,m}=1$ or  $[0,1]$ if $a_{i,m}=-1$.
Passing to the limit for $h\to 0$ in \eqref{19},  since
$f^m(x)|q|=f^m(x)|-q|$  we get
\[\sup_{q\in [-1,1]}\{q\,\pd_m \phi(x)-f(x)|q|\}\le 0.\]
\emph{Case 2: $y_h= x$. }
Since $u_h-\phi$ attains a maximum at $x$,
then for $x=\pi_j(t_h)$ and  $y_{hq}=\pi_j(t_h-hq)\in \bar e_j$
\[u_h(y_h)-\phi(y_h)\ge u(y_{hq})-\phi(y_{hq})\]
and therefore
\[
   \sup_{q\in [-1,1]:\,y_{hq}\in \bar e_j}\left\{-\frac{\phi^j_h(y_{hq})-\phi_h^j(y_h)}{h}- hf^j(y_h)|q|\right\}\le 0
\]
The set $\{q\in\R:\,\pi_j(t-hq)\in \bar e_j\}$
contains for $h$ small enough  either $[-1,0]$ if $a_{i,j}=1$ or  $[0,1]$ if $a_{i,j}=-1$
and passing to the limit for $h\to 0$  we conclude as in the previous case  that
\[
 \sup_{q\in [-1,1]}\{q\,\pd_j \phi(x)-f(x)|q|\}\le 0.
\]
%%%%%%%%%%%%%%%%%%%%%%%%%%%%%%%
To show  that $u$ is a \emph{supersolution}, we
 assume by contradiction that  there exists $j\in Inc_i$ such that for any $k\in Inc_i$, $k\neq j$, there exists
 a   $(j,k)$-test function $\phi_k$ of $u$ at $x$ for which
 \begin{equation}\label{2:prop25}
    \sup_{q\in [-1,1]}\{q\,\pd_j \phi_k(x)-f(x)|q|\} <0.
 \end{equation}
%By the assumptions on $H$, we have
%\begin{equation}\label{2:prop26}
%    H^j(x,\pd_j\phi_k(x))= H^k(x,\pd_k\phi_k(x))<0
%\end{equation}
By adding a quadratic function of the form $-\a_kd(x,y)^2$ to the function $\phi_k$ we may assume that
there exists $r>0$ such that $u-\phi_k$ attains a strict minimum in $\bar B_r(x)$ at $x$. Observe that $x$ is a strict minimum point
of $u-\phi_k$ also in $\bar B_k:=\bar B_r(x)\cap (\bar e_j \cup\bar e_k)$.\\
Since for any $h$, there exists $k_h$ such that
\[u_h^j(v_i)=\inf_{{q\in[-1,1]:\,\pi_{k_h}(t-hq)\in \bar e_{k_h}}}\{u^{k_h}_h(\pi_{k_h}(t-hq))+hf^{k_h}(v_i)|q|\}\]
we may assume, up to a subsequence, that there exists $k\in Inc_i$ such that $k_h=k$ for any $h>0$.\\
 Let $y_h$ be a minimum point of $u_h-\phi_k$ in $\bar B_k$ and let $\omega_h$ be as in \eqref{17b}.
 As in  the subsolution case, we   prove that \eqref{18} holds.
If $y_h\neq x$, we have
for $y_h=\pi_m(t_h)$ and  $t_h-hq\in \bar e_m$
\[u_h(y_h)-\phi(y_h)\le u(\pi_m(t_h-hq))-\phi(\pi_m(t_h-hq))\]
and therefore
\begin{equation*}
   \sup_{q\in[-1,1]:\,\pi_m(t-hq)\in \bar e_m}\left\{-\frac{\phi^m_h(\pi_m(t_h-hq))-\phi_h^m(y_h)}{h}-hf^m(y_h)|q|\right\}\ge 0
\end{equation*}
for  either $m=j$ or $m=k$.
If $y_n= x$, we get
\begin{equation*}
   \sup_{q\in[-1,1]:\,\pi_j(t-hq)\in \bar e_j}\left\{-\frac{\phi^j_h(\pi_j(t_h-hq))-\phi_h^j(y_h)}{h}-hf^j(x)|q|\right\}\ge 0
\end{equation*}
Arguing as in the subsolution case we get for $h\to 0$
\[
 \sup_{q\in [-1,1]}\{q\,\pd_j \phi(x)-f(x)|q|\}\ge 0.
\]
which is a contradiction to \eqref{2:prop25}.\par
We conclude the proof by observing that the uniqueness of the solution to \eqref{eik} implies that any convergent subsequence $u_h$ must converge to the unique  solution   $u$ of \eqref{eik}-\eqref{bc} and therefore    the uniform convergence of all the sequence $u_h$ to $u$.
\end{proof}
%%%%%%%%%%%%%%
%            %
%%%%%%%%%%%%%%
\subsection{Fully discretization in space}
In this section we introduce a FEM like discretization of
\eqref{HJh} yielding a fully discrete scheme.
For any  $j\in J$, given  $\Dx^j>0$ we consider  a finite partition
\[P^j=\{t^j_1=0<\dots<t^j_m<\dots<t^j_{M_j}=l_j\}\] of the interval $[0,l_j]$
such that $|P^j|=\max_{1,\dots,M_j}(t^j_m-t^j_{m-1})\le \Dx^j$. We set
 \begin{equation}\label{k}
 \Dx=\max_{j\in J} \Dx^j, \qquad M=\sum_{j\in J} M_j
 \end{equation}
The partition $P^j$ induces a partition of the arc $\bar e_j$ given by the points
\[x_m^j=\pi_j(t_m^j), \qquad  m=1,\dots,M_j.\]
and  we set  $X_\Dx=\cup_{{j\in J}}\cup_{m=1}^{M_j}x_m^j$.\par
In each interval $[0,l_j]$ we consider a family   of basis functions
$\{\beta^j_m\}_{m=0}^{M_j}$  for  the space of continuous, piecewise linear functions in
the intervals of the partition $P^j$. Hence  $\beta^j_m$ are piecewise linear functions satisfying   $\beta^j_{m}(t_{k})=\delta_{mk}$ for $m,k\in \{1,\dots, M_j\}$
$0\leq \beta^j_m(t)\leq 1$, $\sum_{m=1}^{M_j}\beta^j_m(t)=1$ and for any $t\in [0,l_j]$ at most $2$
$\beta_m^j$'s are non-zero. We define $\bar \beta_j:\bar e_j\to \R$ by
\[\bar \b^j_m(x)=\beta^j_m(\pi_j^{-1}(x)).\]
 Given $W\in \R^M$ we denote by $\cI_\Dx[W]$ the interpolation operator
defined  on the arc $\bar e_j$ by
\[\cI^j_\Dx[W](x)=\sum_{m=1}^{M_j}\bar\b^j_m(x)W^j_m=\sum_{m=1}^{M_j}\b^j_m(\pi_j^{-1}(x))W^j_m\qquad x\in \bar e_j.\]
We consider the approximation scheme
\begin{equation}\label{HJhk}
    U=\cS(\Dx,h,U)
\end{equation}
where the scheme $\cS=\{\cS(\Dx,h,W)\}_{j\in J}$ is  given by
%%%%%%%%
\begin{align}
    &\cS^j_m(\Dx,h,W)= \inf_{q\in[-1,1]:\,x^j_m(q)\in \bar e_j}\{\cI^j[W](x^j_m(q))+hf(x^j_m)|q|\}\label{12k} \\
    & \phantom{aaaaaaaaaaaaaaaaaaaaaaaaaaaaaaaaaaaaaaa}\quad \text{if  $x^j_m\in e_j$}\nonumber\\
    &\cS^j_m(\Dx,h,W)  =  \inf_{{q\in[-1,1]:\,x^k_m(q)\in \bar e_k}\atop {k\in Inc_i}}\{\cI^k[W](x^k_m(q))+hf(x^k_m)|q|\} \label{13k}\\
    & \phantom{aaaaaaaaaaaaaaaaaaaaaaaaaaaaaaaaaaaaaaa} \text{if  $x^j_m=v_i\in I_T$} \nonumber\\
&\cS^j_m(\Dx,h,W)= g(v_i) \label{14k} \\
& \phantom{aaaaaaaaaaaaaaaaaaaaaaaaaaaaaaaaaaaaaaa}\text{if  $x_m^j=v_i,\,i\in I_B$}\nonumber
\end{align}
for $x^j_m(q)=\pi^j(t^j_m-hq)$.
%%%%%%%%%
%\begin{remark}
%%For $j\in J$ and $q\in\R$, we define a matrix $B^j(q)\in \R^{M_j\times M_j}$ and a vector $F^j(q)\in R^{M_j}$ by
%%\begin{align}
%%B^j(q)=\{\beta_l(t^j_m-hq)\}_{l,m=1}^{M_j}\label{15}\\
%%F^j_m(q)=\{f^j(x^j_m)|q|\}_{m=1}^{M_j} \label{16}
%%\end{align}
%%Hence  the problem \eqref{HJh} can be rewritten as
%%\begin{align*}
%%   & W^j_m=\inf_{q\in[-1,1]:\, t^j_m-hq \in [0,l_j]}\{B^j(q)w^j+hF^j_m(q)\}&\qquad \text{if $x_m^j\in e_j$}\\
%%   &W^j_m=\inf_{{q\in[-1,1]:\, t^j_m-hq \in [0,l_j]}\atop{k\in Inc_i}}\{B^k(q)w^j+hF^k_m(q)\}&\qquad \text{if $x^j_m  \in I_T$, $k\in Inc_i$}\\
%%   &W^j_m=g(x_m^j)&\qquad \text{if $x^j_m \in I_B$}
%%\end{align*}
%\end{remark}
%%%
\begin{proposition}
For any $\Dx>0$ with $\Dx\le h/2$, there exists a unique solution $U\in\R^M$ to \eqref{12k}--\eqref{14k}. Moreover,
defined  $u_{h\Dx}(x)= \cI_\Dx[U]$, if $\Dx=o(h)$ for $h\to 0$, then $u_{h\Dx}$ converges  to the unique solution $u$ of \eqref{eik}-\eqref{bc} uniformly in $\G$.
\end{proposition}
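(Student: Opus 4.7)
\emph{Existence and uniqueness of $U$.} I would mirror the argument of Proposition \ref{discrete_problem}. Applying componentwise the Kru\v{z}kov change of variable $V_m^j = 1 - e^{-W_m^j}$ transforms $U = \cS(\Dx,h,U)$ into an equivalent fixed-point problem $V = \bar\cS(\Dx,h,V)$; since on each edge the interpolation operator $\cI_\Dx$ is a convex combination of nodal values, hence $1$-Lipschitz in the sup-norm, the same manipulation as in the semi-discrete case yields a contraction factor $\beta = e^{-h\eta} < 1$, and Banach's fixed-point theorem provides existence and uniqueness of $U \in \R^M$. The technical condition $\Dx \le h/2$ enters here only to ensure that for every grid node $x_m^j$ there is some $q \in [-1,1]$ whose image $x_m^j(q)$ lies in a mesh interval adjacent to $x_m^j$, so that the discrete minimisation in \eqref{12k}--\eqref{13k} effectively propagates information from the boundary inwards.

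\emph{A priori and Lipschitz bounds.} Uniform sup and Lipschitz bounds are obtained by reproducing on the grid the concatenation-of-trajectories arguments of Proposition \ref{discrete_problem}. The resulting estimate $|U_m^j - U_k^l| \le C\,d(x_m^j, x_k^l)$, with $C$ independent of $\Dx$ and $h$, passes to the piecewise-linear interpolant $u_{h\Dx}$ because $\cI_\Dx$ is $1$-Lipschitz on each edge. Thus $\{u_{h\Dx}\}$ is equibounded and equicontinuous on the compact set $\bar\G$.

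\emph{Convergence to the viscosity solution.} Arzel\`a--Ascoli provides a subsequence $u_{h\Dx} \to \bar u$ uniformly on $\bar\G$, and I would show $\bar u = u$ by reproducing the test-function argument of Theorem \ref{convh}. Since $u_{h\Dx}$ is piecewise linear, a local extremum of $u_{h\Dx} - \phi$ near a strict extremum $x_0$ of $\bar u - \phi$ may be taken at a grid node $y_{h\Dx} = x_m^j$ (perturbing $\phi$ by a quadratic of the form $\d\,d(\cdot,x_0)^2$ if necessary); there the scheme identity, combined with the fact that $u_{h\Dx}(z) = \cI_\Dx[U](z)$ for every $z\in\bar\G$, yields
\[
\sup_{q\in[-1,1]:\,x_m^j(q)\in \bar e_j}\!\left\{-\frac{\phi(x_m^j(q)) - \phi(y_{h\Dx})}{h} - f(y_{h\Dx})|q|\right\} \le 0
\]
and the analogous inequality across the incident edges at an internal vertex. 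Under $\Dx = o(h)$, one has $y_{h\Dx} \to x_0$, the admissible set for $q$ fills $[-1,1]$ even at nodes close to a vertex, and the discrete difference quotient converges to $q\,\pd_j\phi(x_0)$, so the sub- and supersolution conditions of Definition \ref{1:def3} are recovered in the limit. The selection of a feasible index $k \in Inc_i$ for the supersolution condition is performed exactly as in Theorem \ref{convh} by a further subsequence. Uniqueness (Theorem \ref{uniqvisco}) then forces $\bar u = u$ and the convergence of the entire family.

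\emph{Main difficulty.} The central obstacle is the interaction between the spatial discretization and the time step $h$ close to the internal vertices. The hypothesis $\Dx = o(h)$ is essential for two related reasons: first, at grid points at distance $O(\Dx)$ from a vertex, the edge constraint $x_m^j(q) \in \bar e_j$ eventually allows $q$ to sweep the full interval $[-1,1]$, which is what keeps the vertex transition condition alive in the limit; second, the interpolation defect $|\cI_\Dx[\phi] - \phi| = O(\Dx)$ on smooth test functions becomes negligible compared to the intrinsic scale $h$ of the scheme, so the error introduced when comparing test-function values with interpolated ones vanishes as $h \to 0$. Modulo these two observations, the adaptation of the argument of Theorem \ref{convh} is routine.
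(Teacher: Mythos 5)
Your first part (Kru\v{z}kov transform plus contraction) matches the paper's argument, but for the convergence you take a genuinely different route. The paper does \emph{not} redo the viscosity test-function argument at the fully discrete level: it compares $u_{h\Dx}$ directly with the semi-discrete solution $u_h$ of \eqref{HJh}, splitting $|\tilde u_h(x)-\tilde u_{h\Dx}(x)|$ into an interpolation error (bounded by $C\Dx$ thanks to the Lipschitz bound on $u_h$ already proved in Proposition \ref{discrete_problem}) and a term controlled by the contraction factor $e^{-h\eta}$, which yields the explicit estimate $\|\tilde u_h-\tilde u_{h\Dx}\|_\infty\le C\Dx/(1-e^{-\eta h})$; the hypothesis $\Dx=o(h)$ then makes this vanish, and Theorem \ref{convh} finishes the proof. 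This two-step decomposition is shorter, reuses all the hard work already done for $u_h$, and gives a quantitative rate $O(\Dx/h)$ that your Barles--Souganidis-style argument does not produce. Your route is in principle viable but buys nothing here except a second, longer pass through the vertex analysis.

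Two points in your plan are under-justified and would need real work. First, you assert a Lipschitz bound $|U_m^j-U_k^l|\le C\,d(x_m^j,x_k^l)$ uniform in $h$ and $\Dx$ by ``reproducing the concatenation-of-trajectories argument''; but the fully discrete solution has no exact trajectory representation, since the interpolation $\cI_\Dx$ averages nodal values at every step, so the error must be tracked over $O(1/h)$ iterations --- this is essentially the $O(\Dx/h)$ estimate the paper proves by the contraction route, so your argument is circular-adjacent rather than routine. Second, the claim that an extremum of $u_{h\Dx}-\phi$ ``may be taken at a grid node'' is false in general (on a mesh interval $u_{h\Dx}$ is affine, so the extremum is interior whenever $\phi$ has the wrong convexity there); one must instead work at a nearby node and absorb an $O(\Dx)=o(h)$ error. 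Finally, note that the paper uses $\Dx\le h/2$ not merely to make the minimisation ``effective'' but as the key ingredient of a separate induction (ordering nodes by $d(\cdot,\partial\G)$) establishing the a priori bound on $U$: it guarantees that $\cI^j[U](x_l^j(q))$ depends only on nodes strictly closer to the boundary, so the bound propagates. Your plan omits this boundedness step entirely, and without it the fixed-point and compactness arguments have nothing to start from.
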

\begin{proof}
We show the boundedness of a solution to \eqref{HJhk} by induction. For this purpose we number the nodes
$x_i$ such that $d(x_{i+1},\partial\G)\ge
d(x_{i},\partial\G)$ for all $i=1,\ldots, M$,
and claim that
\[ |U_i| \le \sup_{x\in\partial \G} |g(x)| + h (L_g +
M_f) + 2 M_f d(x_i,\partial \G). \]
For each $x_i$ with $d(x_i, \partial \G)\le h$ this estimate is
immediate.
Now assume the assertion is true for all $x_i$ with $i=1,\ldots,l-1$.
For $x_l\in \bar e_j$ by \eqref{HJhk} we obtain the inequality
\[ U_l \le h f(x_l)|q| +\cI^j[U](x^j_l(q))
\le hM_f +  \cI^j[U](x^j_l(q))\]
for any $q\in\R^n$ with $|q|\le 1$ and $x^j_l(q)\in \bar e_j$.
Choosing $q$ such that $d(x^j_l(q), \partial \G) = d(x_l, \partial \G)
- h$ and using $\Dx\le h/2$ we obtain that the value $\cI^j[U](x^j_l(q))$
only depends on nodes $x_{i_k}$ with $d(x_{i_k}, \partial \G) \le d(x_l,
\partial \G) - h/2$, thus $i_k<l$. Picking that node $x_{i_k}$ such that
$U_{i_k}$ becomes maximal, and using the induction
assumption we can conclude
\[ U_l \le M_fh + U_{i_k}
\le M_fh + \sup_{x\in\partial \G} |g(x)| + h (L_g +
M_f) + 2 M_f (d(x_i,\partial \G) - h/2)\]
i.e.\ the assertion.\par
To show existence of a unique solution $U$
we apply  the   transformation
\[ \widetilde U = 1-e^{-U} \]
to \eqref{HJhk}. Hence  $\widetilde U$ is a solution to
\begin{equation}\label{HJhkKr}
    \tilde U=\widetilde\cS(\Dx,h, U)
\end{equation}
where
\begin{align*}
 \qquad
    &\widetilde \cS^j_m(\Dx,h,\widetilde W)= \inf_{q\in[-1,1]:\,x^j_m(q)\in \bar e_j}\{e^{-hf(x^j_m)}\cI^j[\widetilde W](x^j_m(q))+1-e^{-hf(x^j_m)|q|}\}\\
    & \phantom{aaaaaaaaaaaaaaaaaaaaaaaaaaaaaaaaaaaaaaaaaaaa}\text{if  $x^j_m\in e_j$}\\
    &\widetilde \cS^j_m(\Dx,h,\widetilde W)  = \inf_{{q\in[-1,1]:\,x^k_m(q)\in \bar e_k}\atop{k\in Inc_i}}\{e^{-hf(x^k_m)}\cI^k[\widetilde W](x^k_m(q))+1-e^{-hf(x^k_m)|q|}\}\\
    & \phantom{aaaaaaaaaaaaaaaaaaaaaaaaaaaaaaaaaaaaaaaaaaaa}\text{if  $x^j_m=v_i\in I_T$} \\
 \qquad &\widetilde \cS^j_m(\Dx,h,\widetilde W)= 1-e^{-g(v_i)}\\
 & \phantom{aaaaaaaaaaaaaaaaaaaaaaaaaaaaaaaaaaaaaaaaaaa}\text{if  $x_m^j=v_i$, $i\in I_B$}
\end{align*}
As in the  proof of Proposition \ref{comph} we show that $\widetilde \cS$ is a contraction in $\R^M$
and we conclude that there exists a unique bounded solution to \eqref{HJhkKr} and therefore to \eqref{HJhk}.\par
To show the convergence of $u_{h\Dx}$ to $u$, we set $\tilde u_h=1-e^{-u_h}$, $\tilde u_{h\Dx}=1-e^{-u_{h \Dx}}$ and we estimate for $x\in\bar e_j$
\begin{equation}\label{HJhk1}
    |\tilde u_h(x)-\tilde u_{h\Dx}(x)|\le \big|\tilde u_h(x)-\cI^j[\tilde U^h](x)\big|+\big|\cI^j[\widetilde U^h](x)-\cI^j[\widetilde U](x)\big|\
\end{equation}
where $\tilde U^h$, $\tilde U$ are  the vectors of the values of $\tilde u_h$, $\tilde u_{h\Dx}$ at the nodes of the grid. By the Lipschitz
continuity and boundedness of $u_h$   we get
\begin{equation}\label{HJhk2}
    |\tilde u_h(x)-\cI^j[\tilde U^h](x)|\le C\Dx
\end{equation}
with $C$ independent of $h$. Moreover, by \eqref{uniq:1} and \eqref{HJhkKr} we get for $x_k=\pi_j^{-1}(t_k)\in e_j$,
 $x_{hq}:=\pi_j(t_k-hq)$ and since  $x^j_k(q)=x_{hq}$
\begin{equation}\label{HJhk3}
   \big|\tilde U^h_k-\tilde U_k\big|\le e^{-h
f(x_k)}|\tilde u_h(x_{hq})-\cI^j[\widetilde U](x^j_k(q))|\le e^{-h\eta}\|\tilde u_h-\tilde u_{h\Dx}\|_\infty
\end{equation}
where $\eta$ as in \eqref{cost}.
Substituting \eqref{HJhk2} and \eqref{HJhk3} in \eqref{HJhk1}
we get
\[\|\tilde u_h -\tilde u_{h\Dx}  \|_\infty\le \frac{C}{1-e^{-\eta h}}\Dx\]
and therefore, taking into account Theorem \ref{convh}, we have that if $\Dx=o(h)$ for $h\to 0$, then $u_{h\Dx}$
converges to $u$ uniformly on $\G$.
\end{proof}
\section{Implementation of the scheme and numerical tests}\label{Examples}
In this section we discuss  the numerical implementation of the  scheme described in the previous section and we present some
numerical examples. We remark again that the most interesting feature of our  approach is that it is  intrinsically one-dimensional, even if
the graph is embedded in $\R^N$. For this reason it does not present the typical curse of dimensionality issue   which is usually encountered in solving  Hamilton-Jacobi equations on $\R^N$.\par
The numerical implementation of   semi-Lagrangian schemes   has been extensively discussed in previous works (see for example the Appendix B in \cite{bcd}), hence the only  regard is due to vertices, where the information  could come from different arcs.
 We briefly  describe the logical structure of the algorithm we use to compute the solution.\\
Let   $A$ be the $m\times m$  incidence matrix defined  in \eqref{1:1}. We also define a  matrix $BC$ which contains the information on boundary vertices, in particular: $BC(\cdot,1)$ represents a boundary  vertex and $BC(\cdot,2)=$ the value of the Dirichlet  datum at that vertex.
The number of the edges is  at most $n=\frac{(m-1)m}{2}$ and, after having ordered the edges,  we   define the {\it auxiliary edges matrix} $B\in M^{3,n}$ where  the $i$-row contains the following information:
\begin{itemize}
	\item $B(i,1)=$ \#knot where the i-arc starts,
	\item $B(i,2)=$ \#knot where the i-arc ends,
	\item $B(i,3)=$ length of the discretized i-arc,
\end{itemize}
We choose the same  discretization step $\Delta x\equiv \Delta x_i$   for every edge, so that the approximated length of the  edge $i$ is $L_i=trunc(\frac{B(i,3)}{\Delta x })\in \N^+$ and we consider a finite partition
\begin{equation}
   P^i=\left\{t_0^i=0,t_1^i=\Delta x,t_2^i=2\Delta x, \cdots , t^i_{M_i-1}=(M_i-1)\Delta x,t^i_{M_i}=B(i,3)\right\}.
\end{equation} The matrix $C$, contains the grid points of the graph, i.e. for the  edge $i$
\begin{equation}
  C(i,j)=\pi_i(t^i_j)\quad j=0,\ldots,M_i
\end{equation}
Finally, we denote by  $U(i,j)$ the  the approximated solution at the point $C(i,j)$ point.   We solve
the problem using the following iteration
\vskip 8pt
  {\bf HJ-networks algorithm.}
\vskip 8pt
 \rule[-0.1cm]{6cm}{0.01cm}
\vskip 8pt
 1.   Initialize  \par
 \hspace{2cm}  $U=U_0$ ;  \par
\hspace{2cm} it=0; \par
\par
 2.  Until convergence, Do  \par
\par
3.   for i=0 to n  \par
            \par
 4.      \hspace{1cm} If there is an $s$ s.t. $B(i,1)=BC(s,1)$ \par

  5.      \hspace{1.5cm}  then $U(i,0)=BC(s,2)$;\par

   6.      \hspace{1.6cm}else \par
 7.      \hspace{1.5cm}   $U(i,0)=\min\left\{\min_{\left\{k|A(B(i,1),k)=1\right\}}\left\{I[U](C(k,\frac{h}{\Delta x}))\right\},\right.$\par
          \hspace{2.2cm}$\left.\min_{\left\{k|A(B(i,1)=-1\right\}}\left\{I[U](C(k,B(k,3)-\frac{h}{\Delta x}))\right\}\right\}+h f(C(i,j))$  \par

  8.   \hspace{1cm}   for $j=0$ to $B(i,3)-1$ \par

   9.     \hspace{2cm}   $U(i,j)=\min_{a\in\left[-1,1\right]}\left\{I[U](C(i,j+\frac{ah}{\Delta x}))\right\}+h f(C(i,j))$ \par

   10.      \hspace{1cm} If there is an $s$ s.t. $B(i,2)=BC(s,2)$\par

  11.      \hspace{1.5cm}  then $U(i,B(i,3))$=BC(s,2);\par

   12.      \hspace{1.6cm}else \par
   13.      \hspace{1.3cm}   $U(i,B(i,3))= \min\left\{\min_{\left\{k|A(B(i,2)=1\right\}}\left\{I[U](C(k,\frac{h}{\Delta x}))\right\},\right.$\par
   \hspace{2.2cm}$\left.\min_{\left\{k|A(B(i,2)=-1\right\}}\left\{I[U](C(k,B(k,3)-\frac{h}{\Delta x}))\right\}\right\}+h f(C(i,j))$   \par

14.      re-initialize vertex on $U$ \par

 15.    EndDo   \par
\medskip
\rule[-0.1cm]{6cm}{0.01cm}
\vskip 12pt

The interpolation $I[U](C(i,x))$ is  the usual linear interpolation, i.e., said $t(x)=trunc(x)$
\begin{equation}
\begin{split}
I[C](x)&= C(i,t(x))+\frac{(x-t(x))}{\Delta x}[C(i,t(x)+1)-C(i,t(x))] \\
I[U](C(i,x))&= U(i,t(x))+(I[C](x)-C(i,t(x)))\frac{U(i,t(x)+1)-U(i,t(x))}{C(i,t(x)+1)-C(i,t(x))}
\end{split}
\end{equation}

\begin{remark}
The order given to the edges, which is necessary to define the previous iteration,    brings some  additional problems that we have to consider:
\begin{itemize}
	\item At the end of each iteration of the method, the values of the solution at a   same vertex, which is contained in different arcs, could be different. Hence  we make  a re-initialization, choosing for every vertex the minimum of the  previous values.\par
	\item
It is also important that the initial guess $U_0$ of the solution we use to initialize the algorithm is greater than  the solution. In fact, if this condition is not satisfied,  for particular choices of the discretization step  the algorithm  could generate a non correct minimum.
\end{itemize}
\end{remark}

\begin{figure}[th]
\begin{center}
\includegraphics[height=6cm]{./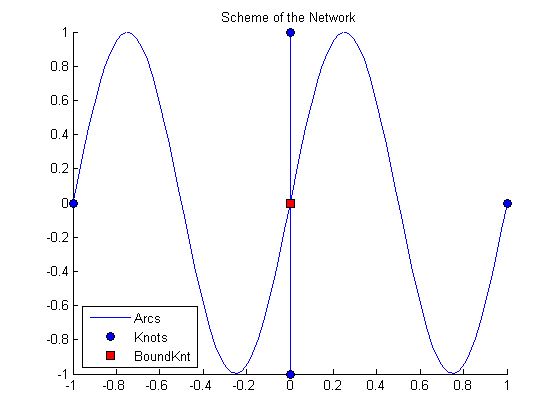}
\caption{Test 1, structure of the graph.} \label{scheme2}
\end{center}
\end{figure}

\begin{figure}[ht]
\begin{center}
\includegraphics[height=7cm]{./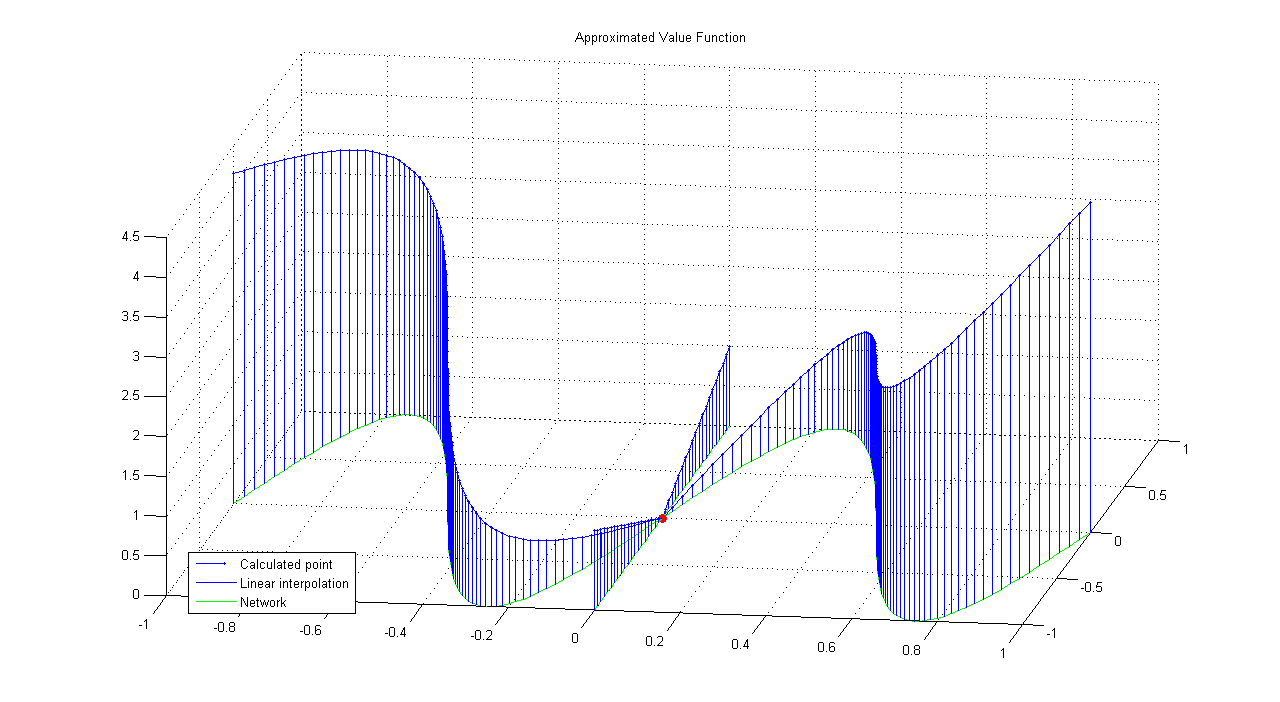}  \\
\caption{Test 1, $\Delta x=0.025$.} \label{fig2}
\end{center}
\end{figure}

In the first test we consider a five knots graph with two straight arcs and two sinusoidal ones (see figure \ref{scheme2}). The only boundary knot is the one placed at the origin and  the value of the solution at this knot is fixed to zero. The cost function is constant, i.e. $f(x)\equiv 1$  on  $\Gamma$. In this case the correct solution is
\begin{equation}
\begin{split}
u(x)=&dist(x,0)=|x_2| \quad \hbox{ for the straight arcs} \\
u(x)=&\int^{|x_1|}_0(\sqrt{1+(2 \pi \cos{2\pi t})})dt \quad \hbox{ for sinusoidal arcs}
\end{split}
\end{equation}
An approximated solution is shown in Figure \ref{fig2}. In Table \ref{tab2}, we compare the exact  solution with the approximate one, obtained by the scheme.  We observe a numerical convergence to the correct solution in $L_2$-norm and in the uniform one. As uniform norm we consider the maximum of the uniform norm of the error on every arc and as $L_2$-norm the maximum of the $L_2$-norm on every arc.  We can observe an order of convergence close to $0.5$ that is the typical theoretical order of convergence in the uniform norm of semi-Lagrangian schemes in $\R^n$, (see for instance \cite{ff}).\par

\begin{center}
\begin{table}[hb]
\begin{tabular}{||p{3.3cm}||*{4}{c|}|}
\hline
        $\Delta x=h $      &   $||\cdot||_\infty$  &$Ord(L_\infty)$&  $||\cdot||_2$ &  $Ord(L_2)$ \\
\hline
\hline
\bfseries 0.2 & 0.1468   &   &  0.1007 &  \\
\hline
\bfseries 0.1  &  0.0901 &  0.7043   & 0.0639  &  0.6562\\
\hline
\bfseries 0.05  &  0.0630   &    0.5162  &  0.0491 &  0.3801\\
\hline
\bfseries 0.025  &  0.0450 &   0.4854 &    0.0402 &  0.2885\\
\hline
\bfseries 0.0125  & 0.0321   &    0.4874  &  0.029   &  0.4711\\
\hline

\end{tabular}
\medskip
\caption{Test 1.}\label{tab2}
\end{table}
\end{center}

\begin{figure}[th]
\begin{center}
\includegraphics[height=6cm]{./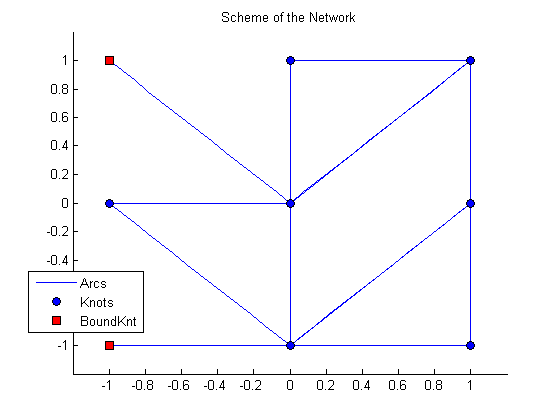}
\caption{Test 2, structure of the graph.} \label{scheme3}
\end{center}
\end{figure}

\clearpage

\begin{figure}[ht]
\begin{center}
\includegraphics[height=7cm]{./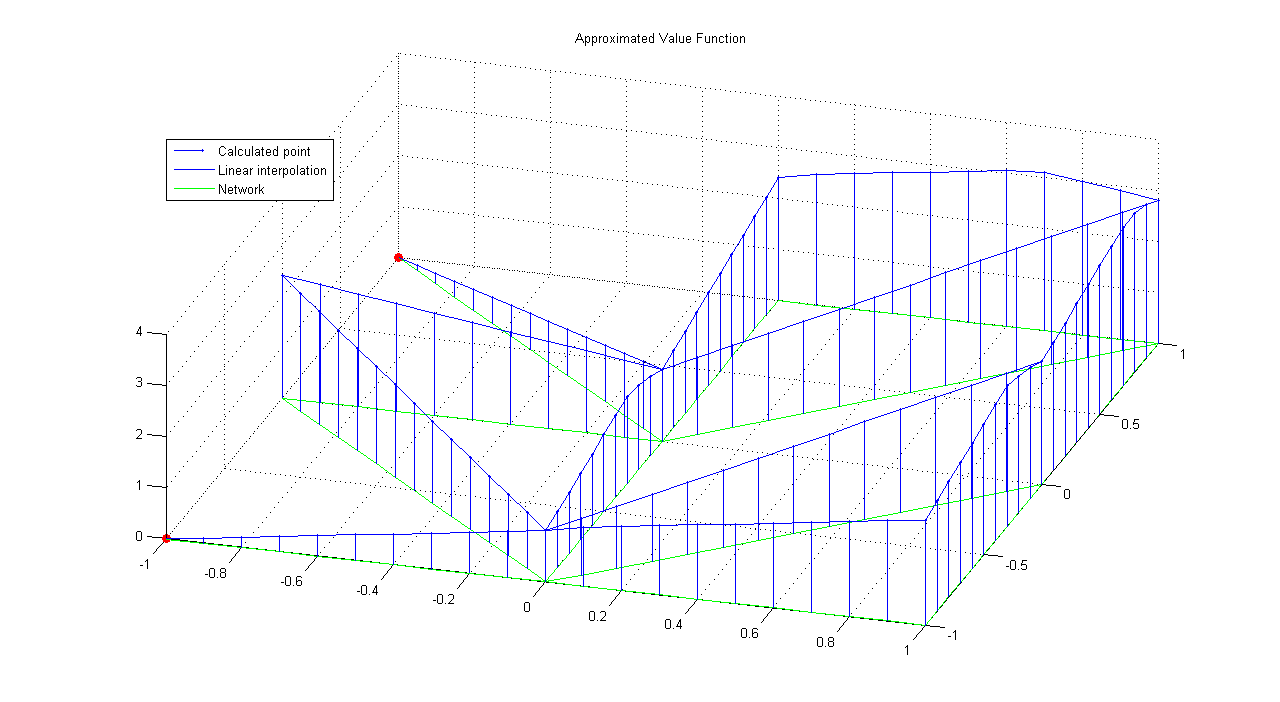}  \\
\caption{Test 2, $\Delta x=0.1$.} \label{fig3}
\end{center}
\end{figure}

In the second test we present a more complicated graph with  two boundary vertices and a   several connections among the arcs. Also in this case, we consider a constant cost function $f(x)\equiv 1$ on  $\Gamma$. In Table \ref{tab3} and in Figure \ref{fig3} we show our results.\\
In this case we observe an improvement of  order of convergence with respect to the previous example. This is due to the fact that the graph is compose  of only straight arcs and  this  reduces  the error due to the  piecewise linear discretization of the  arcs.\par

\begin{center}
\begin{table}[hb]
\begin{tabular}{||p{3.3cm}||*{4}{c|}|}
\hline
        $\Delta x=h $      &   $||\cdot||_\infty$  &$Ord(L_\infty)$&  $||\cdot||_2$ &  $Ord(L_2)$ \\
\hline
\hline
\bfseries 0.2 & 0.1716   &   &  0.0820 &  \\
\hline
\bfseries 0.1  &  0.0716 &  1.2610   & 0.0297 &  1.4652\\
\hline
\bfseries 0.05  &  0.0284   &    1.3341  &  0.0127&  1.2256\\
\hline
\bfseries 0.025  &  0.0126 &   1.1611 &    0.0072 &  0.8188\\
\hline
\bfseries 0.0125  &  0.0056 &   1.1699 &    0.0037 &  0.9605\\
\hline

\end{tabular}
\medskip
\caption{Test 2.}\label{tab3}
\end{table}
\end{center}

\begin{figure}[ht]
\begin{center}
\includegraphics[height=6cm]{./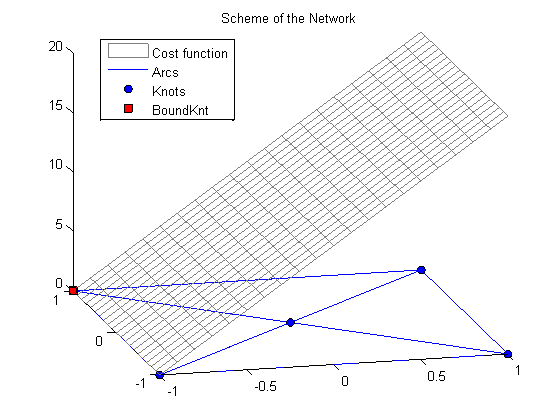}
\caption{Test 3, structure of the graph.} \label{scheme4}
\end{center}
\end{figure}

\begin{figure}[ht]
\begin{center}
\includegraphics[height=7cm]{./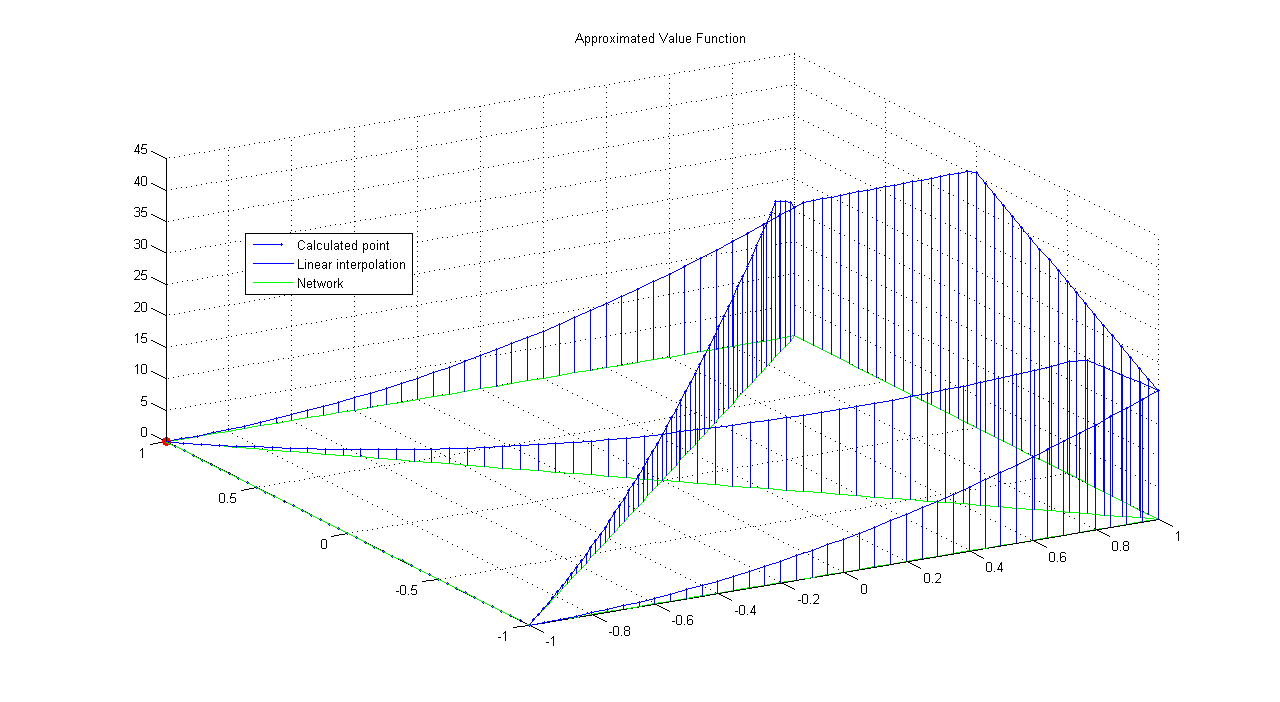}  \\
\caption{Test 3, $\Delta x=0.05$.} \label{fig4}
\end{center}
\end{figure}

\clearpage

In the third test we consider a  five knots graph (figure \ref{scheme4}), with  a running cost which is not constant. For any point on the graph $x=(x_1,x_2)\in \Gamma$, we take $f(x)=10(x_1-1)+\eta$, hence  $f(x)\geq\eta>0$  for  $x\in \Gamma$. In the example, we set $\eta=10^{-10}$. The graph of the approximate solution is shown in Figure \ref{fig4}. Also in this case we provide a experimental table of convergence for the error (Table \ref{tab4}). In absence of an exact solution we compare the approximation for various grid sizes with a discrete solution $U_{ex}$ on a fine grid ($\Delta x= 0.005$).

\begin{center}
\begin{table}[hb]
\begin{tabular}{||p{3.3cm}||*{4}{c|}|}
\hline
        $\Delta x=h $      &   $||\cdot||_\infty$  &$Ord(L_\infty)$&  $||\cdot||_2$ &  $Ord(L_2)$ \\
\hline
\hline
\bfseries 0.2 & 0.3800   &   &  0.2078 &  \\
\hline
\bfseries 0.1  &  0.1800 &  1.078   & 0.0855 &  1.2812\\
\hline
\bfseries 0.05  &  0.08   &    1.1699  &  0.0419&  1.029\\
\hline
\bfseries 0.025  &  0.035 &   1.1926 &    0.0222 &  0.9164\\
\hline
\bfseries 0.0125  &  0.0166 &   1.0762 &    0.0103 &  1.1079\\
\hline

\end{tabular}
\medskip
\caption{Test 3.}\label{tab4}
\end{table}
\end{center}

\begin{figure}[ht]
\begin{center}
\includegraphics[height=5.5cm]{./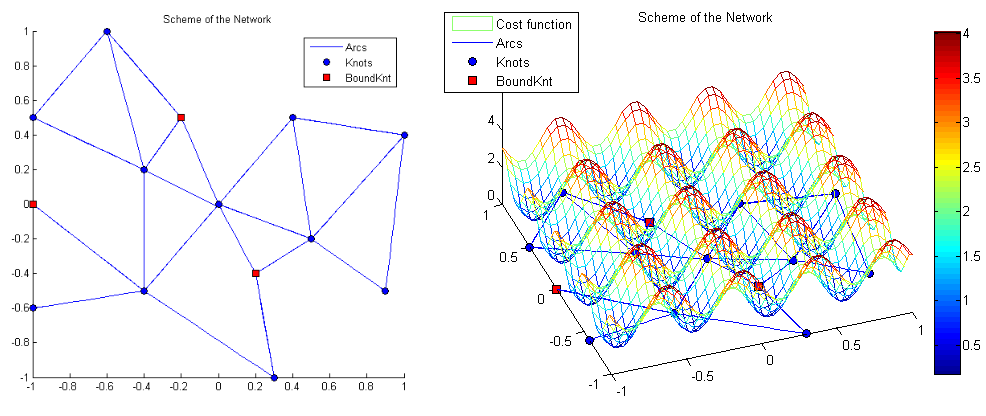}
\caption{Test 4, structure of the graph.} \label{scheme5}
\end{center}
\end{figure}

\begin{figure}[ht]
\begin{center}
\includegraphics[height=7cm]{./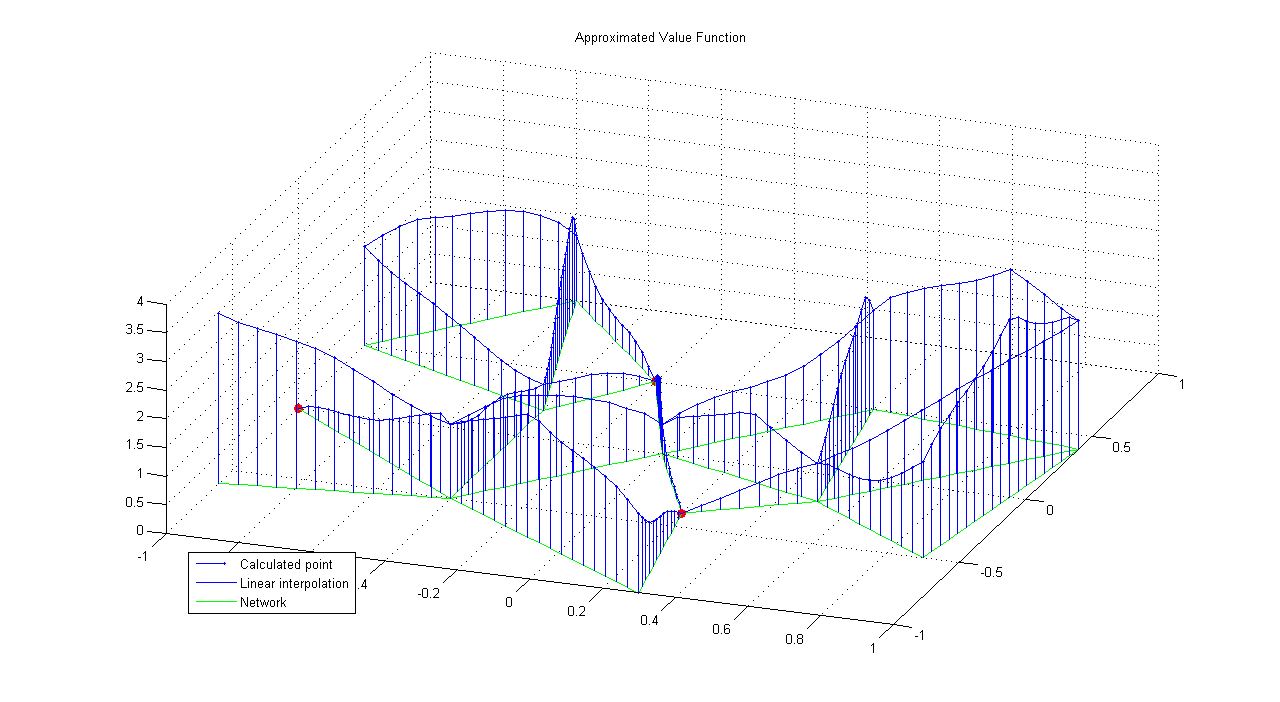}  \\
\caption{Test 4, $\Delta x=0.05$.} \label{fig5}
\end{center}
\end{figure}

As our last test we consider a graph with several boundary points and a more complicated running cost function $f$. A representation of this graph is shown in Figure \ref{scheme5}. We consider the following function $f$
\begin{equation}
  f(x_1,x_2)=2.1-\sin(4 \pi x_1)+\cos(4 \pi x_2)
\end{equation}
obviously, because of the regularity of this function, its restriction on the arcs of the graph is continuous.\\
In Table  \ref{tab5} we show a comparison for the error in various grid steps. Also in this case, in absence of the correct solution, we consider as correct the approximation on a fine grid ($\Delta x=0.005$).

\begin{center}
\begin{table}[hb]
\begin{tabular}{||p{3.3cm}||*{4}{c|}|}
\hline
        $\Delta x=h $      &   $||\cdot||_\infty$  &$Ord(L_\infty)$&  $||\cdot||_2$ &  $Ord(L_2)$ \\
\hline
\hline
\bfseries 0.2 & 0.7049   &   &  0.3676 &  \\
\hline
\bfseries 0.1  &  0.2925 &  1.2690   & 0.1557 &  1.2394\\
\hline
\bfseries 0.05  &  0.1460  &    1.0025  &  0.0777&  1.0028\\
\hline
\bfseries 0.025  &  0.0728 &   1.0040 &    0.0320 &  1.2798\\
\hline
\bfseries 0.0125  &  0.0375 &   0.9570 &    0.0108&  1.5670\\
\hline

\end{tabular}
\medskip
\caption{Test 4.}\label{tab5}
\end{table}
\end{center}

\bibliographystyle{model1b-num-names}
\bibliography{mybib}
\end{document}